\newcommand\ZZ{{\hat{\mathbb Z}}}
\newcommand\Z{{\mathbb Z}}
\newcommand\Q{{\mathbb Q}}
\newcommand\D{{\mathbb D}}
\newcommand\mS{{\mathbb S}}
\newcommand\R{{\mathbb R}}
\newcommand\N{{\mathbb N}}
\newcommand\cL{{\mathscr L}}
\newcommand\ra{\rightarrow}
\newcommand\ilim{\varprojlim}
\newcommand\Spec{\operatorname{Spec}}
\newcommand\aut{\operatorname{Aut}}
\newcommand\out{\operatorname{Out}}
\newcommand\inn{\operatorname{inn}}
\newcommand\lra{\longrightarrow}
\newcommand\hookra{\hookrightarrow}
\newcommand\da{\downarrow}
\renewcommand{\hom}{\mathrm{Hom}}
\renewcommand{\Im}{\mathrm{Im}}
\newcommand\sr{\stackrel}
\newcommand\st{\scriptstyle}
\newcommand\sst{\scriptscriptstyle}
\newcommand\hP{\widehat{\Pi}}
\newcommand\ssm{\smallsetminus}
\newcommand\ol{\overline}
\newcommand\cC{{\mathscr C}}
\newcommand\Ld{\Lambda}
\newcommand\wh{\widehat}
\newcommand\td{\tilde}
\newcommand\gm{\gamma}
\def\co{\colon\thinspace}
\newtheorem{theorem}{Theorem}[section]
\newtheorem{corollary}[theorem]{Corollary}
\newtheorem{proposition}[theorem]{Proposition}
\newtheorem{lemma}[theorem]{Lemma}
\theoremstyle{definition}
\newtheorem{definition}[theorem]{Definition}
\newtheorem{remark}[theorem]{Remark}
\newtheorem{remarks}[theorem]{Remarks}
\begin{document}

\title{Characterizing closed curves on Riemann surfaces\\ via homology groups of coverings}
\author{Marco Boggi \and Pavel Zalesskii}\maketitle

\begin{abstract}
Let $S$ be a hyperbolic oriented Riemann surface of finite type. The main purpose of this paper is
to show that non-trivial geometric intersection between closed curves on $S$ is detected by some
symplectic submodules they naturally determine in the homology groups of the compactifications of
unramified $p$-coverings of $S$, for $p\geq 2$ a fixed prime. In particular, this gives a
characterization of simple closed curves on $S$ in terms of homology groups of $p$-coverings.

In Section~\ref{group theoretic}, we define a $p$-adic Reidemeister pairing on the fundamental
group of $S$ and show that the free homotopy classes of two loops have trivial geometric intersection 
if and only if they are orthogonal with respect to this pairing.

As an application, we give a geometric argument to prove that oriented surface groups are conjugacy
$p$-separable (a combinatorial proof of this fact was recentely given by Paris \cite{Paris}).
\newline

\noindent
{\bf MSC2010:} 20F65, 57N10, 30F99, 20E18.
\end{abstract}

\section{Introduction}
In the papers \cite{Jaco} and \cite{Sta}, Stallings and Jaco established the equivalence between
the Poincar\'e conjecture, now a celebrated theorem by Perelman, and the following
group-theoretical statement:
\begin{itemize}
\item[$\star$]{\it Let $S_g$ be a closed oriented surface of genus $g\geq 2$. Let $F_g$ be a free
group of rank $g$. Let $\eta\co\pi_1(S_g,s_0)\ra F_g\times F_g$ be an epimorphism. Then, there
is a non-trivial element in the kernel of $\eta$ which may be represented by a simple closed curve in $S_g$.}
\end{itemize}
Of course, it is still an interesting problem to provide a group-theoretic proof of
the above statement. The first step in this direction is to give an algebraic characterization
of simple closed curves on the closed Riemann surface $S_g$.

A program in this sense was formulated by Turaev \cite{Tu}. Progress in this direction have
been recently accomplished by Chas, Gadgil and Krongold (see  \cite{C-G}, \cite{C-K},
\cite{C-K2}), who characterize simple closed curves on a Riemann surface
in terms of the Goldman Lie algebra.

In Section~\ref{simple}, we give an elementary criterion to characterize simple closed curves on
any hyperbolic Riemann surface in terms of the intersection pairing on the closures
of normal unramified $p$-coverings of the given Riemann surface, for a fixed prime $p$.
The proof is based on the hyperbolic geometry of the $p$-adic solenoid,
developed by means of some elementary pro-$p$ group theory.

In Section~\ref{group theoretic}, we refine further this characterization by means of a $p$-adic
Reidemeister pairing. This is a quite natural pro-$p$ version of the classical one (cf.\ Section~3 \cite{Hempel2}).
We show that, an element $\gm\in\pi_1(S_g,s_0)$ contains a simple closed curve in its free homotopy class
if and only if it is singular for the $p$-adic Reidemeister pairing. In particular, whether the
free homotopy class of $\gm$ contains or not an embedded representative can be determined purely in
terms of the group structure of the fundamental group $\pi_1(S_g,s_0)$. 

In Section~\ref{separating}, we apply the above characterization of simple closed curves to show
that it is possible to distinguish homotopy classes of closed curves on a hyperbolic Riemann
surface in terms of the first homology groups of its normal unramified $p$-coverings. An easy
consequence is conjugacy $p$-separability of surface groups.

\section{Characterization of simple closed curves}\label{simple}
Let $S_g$ be a compact oriented Riemann surface without boundary of genus $g$ and let
$S_{g,n}:=S_g\ssm\{P_1,\ldots,P_n\}$ be the same surface from which $n$ distinct points have
been removed. We assume that $\chi(S_{g,n})=2-2g-n<0$. Even if most of the stated results
hold also in the non-orientable case, for simplicity, we only consider the orientable case.

\emph{A closed curve on $S_{g,n}$} is a continuous map from the circle $S^1$ to
the surface $S_{g,n}$. For a fixed base point $a$, let us denote by $\Pi_{g,n}$ the fundamental
group of $S_{g,n}$. Then, it is well known that the set of homotopy classes of closed curves on
$S_{g,n}$ identifies with the quotient of the sets $\{\{\alpha,\alpha^{-1}\}|\,\alpha\in\Pi_{g,n}\}$ and
$\{\langle\alpha\rangle |\,\alpha\in\Pi_{g,n}\}$ by the action induced by inner automorphisms
of $\Pi_{g,n}$. 

Given a covering map $p\co S'\ra S_{g,n}$ and a closed curve $\gm\co S^1\ra S_{g,n}$,
the connected components of the fiber product $S^1\times_{S_{g,n}}S'$ are homeomorphic to $S^1$.
Let us then call the restriction $\td{\gm}\co S^1\ra S'$ to a connected component of the pull-back map
$\gm'\co S^1\times_{S_{g,n}}S'\ra S'$  \emph{an elevation of $\gm$ to $S'$} (this terminology is borrowed
from \cite{FH}).

\begin{definition}\label{submodule}For $K$ a finite index subgroup of $\Pi_{g,n}$, let
$p_K\co S_K\ra S_{g,n}$ be the associated covering, $\ol{S}_K$ the compact Riemann surface
obtained filling in the punctures of $S_K$ and $H_1(\ol{S}_K)$ the first homology group
of $\ol{S}_K$ with $\Z$-coefficients. Let us then define $V^K_\gm$ to be the submodule of
$H_1(\ol{S}_K)$ generated by the cycles supported on the elevations of $\gm$ to $S_K$.
\end{definition}

For a normal finite index subgroup $K$ of $\Pi_{g,n}$, let $G_K$ be the deck
transformation group of $p_K\co S_K\ra S_{g,n}$. Given a closed curve $\gm$ in $S_{g,n}$,
the group $G_K$ then acts naturally and transitively on the set of elevations of
$\gm$ to $S_K$ and $V^K_\gm$ is a $G_K$-invariant submodule of $H_1(\ol{S}_K)$.

For a closed curve $\gm$ on $S_{g,n}$, let $\td{\gm}$ be an element of $\Pi_{g,n}$ whose free
homotopy class contains the closed curve $\gm$ and let $k>0$ be the smallest integer such that
$\td{\gm}^k\in K$. Then, the submodule $V^K_\gm$ can also be characterized as the image of
the normal subgroup $\langle\td{\gm}^k\rangle^{\Pi_{g,n}}\cap K$ in the homology group
$H_1(\ol{S}_K)$, where we denote by $\langle\td{\gm}^k\rangle^{\Pi_{g,n}}$ the smallest 
normal subgroup of $\Pi_{g,n}$ which contains $\td{\gm}^k$. Let us also define $V^K_{\td{\gm}}:=V^K_\gm$.

For an integer $s>0$, \emph{an $s$-power} of a closed curve $\gm\co S^1\ra S_{g,n}$ is a closed
curve $\gm^s\co S^1\ra S_{g,n}$ which factors through a continuous map $S^1\ra S^1$ of degree $s$
and $\gm$. A closed curve $\gm$ on the Riemann surface $S_{g,n}$ is called \emph{non-power} or
\emph{primitive} if it is not homotopic to an $s$-power of a closed curve, for $s>1$.
Let us observe that an elevation of a non-power closed curve is also non-power.

\emph{A simple closed curve (briefly s.c.c.) on $S_{g,n}$} is an embedded circle $S^1\hookra S_{g,n}$.
An interesting problem is that of establishing when a non-power closed curve $\gm$ on
$S_{g,n}$ has in its homotopy class an embedded representative. In this section, we will give a
characterization of this property in terms of the homology of finite unramified coverings of $S_{g,n}$.

More generally, we will be able to determine in this way when the geometric intersection number
$|\gm\cap\gm'|_G$ between two closed curves $\gm$ and $\gm'$ is zero or one.

In order to formulate the main result, it is convenient to introduce some natural topologies on
the fundamental group $\Pi_{g,n}$.

\emph{A profinite topology} on a group $G$ is a topology (making it a topological group) for which
a  neighborhood basis  of the identity consists of finite index subgroups.
The usual way to define such a topology on the group $G$ is to fix a class of
finite groups $\cC$ in the sense of the definition below:

\begin{definition}\emph{A class of finite groups} (cf. Definition~3.1 in \cite{A-M}) is a full 
subcategory $\cC$ of the category of finite groups which is closed under taking subgroups, 
homomorphic images and extensions (meaning that a short exact sequence of finite groups is 
in $\cC$ whenever its exterior terms are). We always assume that $\cC$ contains a nontrivial group. 
\end{definition}

Since $\cC$ contains a nontrivial group, it will contain $\Z/p$ for some prime $p\ge 2$ and then it
easily follows that $\cC$ contains all the finite groups whose order is a power of $p$.

The finite groups whose order is a power of $p$ are nilpotent and form a class of finite groups by
themselves (denoted sometimes $(p)$). So these provide the minimal examples.

Given a group $G$, then we say that a subgroup $H\leq G$ is \emph{$\cC$-open} if it contains a
normal subgroup $N$ of $G$ such that the quotient group $G/N$ belongs to $\cC$. In this case, we
write $H\leq_\cC G$ and $N\lhd_\cC G$, respectively. A fundamental system of neighborhoods of the identity
for the pro-$\cC$ topology on the group $G$ is given by any cofinal system of
$\cC$-open normal subgroups. We say that a subgroup $H$ of $G$ is $\cC$-closed if $H$ is
a closed subset in the pro-$\cC$ topology of $G$ and that a map of groups $G\ra G'$ is
$\cC$-continuous if it is continuous in the pro-$\cC$ topologies of $G$ and $G'$.
Note that a monomorphism is always $\cC$-continuous.

\emph{The pro-$\cC$ completion} of a group $G$ is defined to be the inverse limit:
$$\widehat{G}^\cC:=\varprojlim_{\sst N\lhd_{\cC} G} G/N.$$
Let us endow the finite groups $G/N$ with the discrete topology and the group $\wh{G}^\cC$ with
the subspace topology induced by the natural monomorphism
$\wh{G}^\cC\hookra\prod_{\sst N\lhd_{\cC} G} G/N$. Then, the profinite group $\wh{G}^\cC$
is a compact, Hausdorff, totally disconnected, topological group and a fundamental system of
neighborhoods of the identity is provided by the kernels of the natural epimorphisms
$\wh{G}^\cC\ra G/N$, for $N\lhd_\cC G$. There is a natural homomorphism of groups
$G\ra\wh{G}^\cC$ with dense image and the pro-$\cC$ topology on $G$ is, by definition,
the weakest topology for which this map is continuous.

With the above notations and definitions, let us now state the main result of the paper:

\begin{theorem}\label{geo int}Let $\cC$ be a class of finite groups. Then, a pair of (not necessarily distinct) 
closed curves $\gm$ and $\gm'$ on a Riemann surface $S_{g,n}$ have trivial geometric intersection
if and only if, for a cofinal system of $\cC$-open subgroups $\{K\}$ of $\Pi_{g,n}$, there holds
$\langle x,y\rangle_K=0$, for all $x\in V^K_\gm$ and all $y\in V^K_{\gm'}$, where
$\langle\_ ,\_\rangle_K$ is the intersection pairing on the first integral homology group
of the closed Riemann surface $\ol{S}_K$.
 \end{theorem}

Taking $\gm'=\gm$ in Theorem~\ref{geo int}, we get the aforementioned characterization of s.c.c.'s:

\begin{corollary}\label{s.c.c.1}The homotopy class of a non-power closed curve $\gm$ on $S_{g,n}$ contains a
simple closed curve if and only if, for a cofinal system of
$\cC$-open subgroups $\{K\}$ of $\Pi_{g,n}$, the associated submodules
$V^K_\gm$ of $H_1(\ol{S}_K)$ are totally isotropic.
\end{corollary}

Thanks to Corollary~4.4 in \cite{Hempel2}, we can also give an algebraic criterion to decide whether
two non-homotopic s.c.c.'s $\alpha$ and $\beta$ on $S_{g,n}$ have geometric intersection one. 

\begin{corollary}\label{Hempel}Let $\alpha$ and $\beta$ be non-homotopic s.c.c.'s on $S_{g,n}$.
Then $\alpha$ and $\beta$ are homotopic to s.c.c.'s which meet transversally in a single point
if and only, for some $\td{\alpha},\td{\beta}\in\Pi_{g,n}$, whose free homotopy classes contain,
respectively, $\alpha$ and $\beta$, and for a cofinal system of $\cC$-open subgroups $\{K\}$ of $\Pi_{g,n}$, 
the associated submodules $V^K_{[\td{\alpha},\td{\beta}]}$ of $H_1(\ol{S}_K)$ are totally isotropic.
\end{corollary}

The case in which Theorem~\ref{geo int} and its corollaries are more interesting is
undoubtably when $\cC$ is the class of finite $p$-groups. In particular, the applications given in
Section~\ref{separating} make use only of this case.

\section{The pro-$\cC$ hyperbolic solenoid}\label{solenoid section}
Let us fix a complete, non-singular metric on $S_{g,n}$ of constant curvature $-1$.
A closed curve $\gm$ on $S_{g,n}$ is \emph{peripheral} if it is homotopic to a
power of a s.c.c.\ bounding a $1$-punctured disc on $S_{g,n}$. Peripheral closed curves are
characterized by the property that they have trivial geometric intersection with any closed curve
on the surface. With the chosen metric, all non-peripheral closed curves on $S_{g,n}$ have a
unique geodesic representative in their homotopy class.

Let $\D\ra S_{g,n}$ be the universal covering space. The choice of a point
$\td{a}\in\D$, lying above the base point $a\in S_{g,n}$, identifies the fundamental group
$\Pi_{g,n}$ with the covering transformation group of $\D\ra S_{g,n}$.
The space $\D$, with the induced metric, can then be identified with the Poincar\'e disc and the
fundamental group $\Pi_{g,n}$ with a discrete subgroup of
$\mathrm{Aut}(\D)\cong\mathrm{PSL}_2(\R)$. Note that all elements of $\Pi_{g,n}$ are identified
with hyperbolic elements of $\mathrm{PSL}_2(\R)$, except those containing a peripheral curve
in their homotopy class, which are instead identified with parabolic elements.

For $\cC$ a given class of finite groups, let $\hP_{g,n}^\cC$ be the pro-$\cC$ completion of the
fundamental group $\Pi_{g,n}$.
Since $\cC$ contains the class of finite $p$-groups, for some prime $p>1$, by Theorem~\ref{injective},
there is a natural monomorphism $\Pi_{g,n}\hookra\hP_{g,n}^\cC$. By means of this monomorphism,
let us identify $\Pi_{g,n}$ with its image in the pro-$\cC$ group $\hP_{g,n}^\cC$.

\emph{The pro-$\cC$ hyperbolic solenoid} $\mS_{g,n}^\cC$ is defined to be the inverse limit space:
$$\mS_{g,n}^\cC:=\varprojlim_{\sst K\lhd_\cC\Pi_{g,n}}S_K\cong
\varprojlim_{\sst K\lhd_\cC\Pi_{g,n}}\left.[\D\times(\Pi_{g,n}/K)]
\right/\Pi_{g,n}=\left.\D\times\hP_{g,n}^\cC\right/\Pi_{g,n},$$
where an element $x\in\Pi_{g,n}$ acts on the product $\D\times\hP_{g,n}^\cC$ by the formula
$x\cdot(d,\beta)=(x\cdot d,x\beta)$. This is a generalization of the hyperbolic
$n$-punctured, genus $g$ solenoid. For $\cC=(p)$, the pro-$p$ solenoid
$\mS_{g,n}^{(p)}$ is called \emph{the $p$-adic solenoid}. For $\cC$ the class of all finite groups,
the pro-$\cC$ solenoid is denoted simply by $\mS_{g,n}$ and called \emph{the solenoid}.

From the above realization of the pro-$\cC$ solenoid and the fact that the natural homomorphism
$\Pi_{g,n}\ra\hP_{g,n}^\cC$ is injective, it follows that the inverse limit of the
natural covering maps $\D\ra\D/K$ is a $\Pi_{g,n}$-equivariant embedding $\D\hookra\mS_{g,n}^\cC$
with dense image. Moreover, under the isomorphism described above, the image of the
Poincar\'e disc in the solenoid identifies with the image of $\D\times\Pi_{g,n}$ in the quotient
$\D\times\hP_{g,n}^\cC/\Pi_{g,n}$. Let us identify $\D$ with its image in the $p$-adic solenoid.
In this way, we get \emph{the preferred analytic leaf $\D$} of the solenoid. The other leaves are
obtained translating this one by the action of $\hP_{g,n}^\cC$. 

From the above construction, it is clear that the preferred leaf is determined by the canonical monomorphism
$\Pi_{g,n}\hookra\hP_{g,n}^\cC$ (a discretification of $\hP_{g,n}^\cC$). Twisting this canonical
monomorphism by the inner automorphism
$\inn_\gm\co x\mapsto\gm x\gm^{-1}$, for $\gm\in\hP_{g,n}^\cC$, we get another realization of the
pro-$\cC$ solenoid whose preferred leaf is instead the translated leaf $\gm\cdot\D$:
$$\mS_{g,n}^\cC\cong\left.\gm\cdot\D\times\hP_{g,n}^\cC\right/\Pi_{g,n},$$
where now an element $x\in\Pi_{g,n}$ acts on the product $\gm\cdot\D\times\hP_{g,n}^\cC$ by the
formula $x\cdot(d,\beta)=(\inn_\gm(x)\cdot d,\inn_\gm(x)\beta)$.

\begin{proposition}\label{solenoid}The pro-$\cC$ solenoid $\mS_{g,n}^\cC$ is a connected
Hausdorff topological space endowed with a natural continuous action of the pro-$\cC$ group
$\hP_{g,n}^\cC$, with quotient the Riemann surface $S_{g,n}$.
\end{proposition}

\begin{proof}The pro-$\cC$ solenoid contains the disc $\D$ as a dense connected subspace and a topological
space containing a connected dense subset is connected.
\end{proof}

If $\Ld_\cC$ is the set of primes which occur as orders of groups in $\cC$, then
there holds $\ZZ^\cC\cong\prod_{p\in\Ld_\cC}\Z_p$. For every prime $p\in\Ld_\cC$, there
is a natural epimorphism $\hP_{g,n}^\cC\ra\hP_{g,n}^{(p)}$. It follows that the pro-cyclic subgroup
topologically generated by an element $\gm\in\Pi_{g,n}$ inside the pro-$\cC$ completion $\hP_{g,n}^\cC$
is isomorphic to $\ZZ^\cC$. Let us denote this group simply by $\gm^{\ZZ^\cC}$.

Let us describe the closure inside the pro-$\cC$ solenoid $\mS_{g,n}^\cC$ of a hyperbolic line
$\ell$ of $\D$ (also called \emph{a geodesic}), obtained as the inverse image of a closed geodesic
curve in $S_{g,n}$:

\begin{proposition}\label{closure geo}Let $\ell$ be a geodesic on the Poincar\'e disc $\D$ whose
image in $S_{g,n}$ is a closed curve. Let us then identify $\ell$ with its canonical image in the
pro-$\cC$ solenoid $\mS_{g,n}^\cC$. The closure $\ol{\ell}$ of this geodesic inside $\mS_{g,n}^\cC$
is naturally isomorphic to the one-dimensional pro-$\cC$ solenoid $\R\times\ZZ^\cC/\Z$, where
$z\in\Z$ acts on $\R\times\ZZ^\cC$ by the formula $z\cdot(r,s)=(z+ r,z+s)$. In particular, the closure
$\ol{\ell}$ has no self-intersection points.
\end{proposition}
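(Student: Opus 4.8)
\textbf{Proof proposal for Proposition~\ref{closure geo}.}

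The plan is to analyze the closure of $\ell$ leaf by leaf, exploiting the product structure $\mS_{g,n}^{(p)}=\D\times\Pi_{g,n}^{(p)}/\Pi_{g,n}$. A point of $\mS_{g,n}^{(p)}$ is a $\Pi_{g,n}$-orbit of a pair $(d,\beta)$, and the preferred leaf $\D$ is the image of $\D\times\{1\}$. First I would set $\Gamma_\ell\leq\Pi_{g,n}$ to be the stabilizer of the geodesic $\ell$ inside the discrete group $\Pi_{g,n}<\mathrm{PSL}_2(\R)$; since $\Pi_{g,n}$ is torsion-free and acts discretely, $\Gamma_\ell$ is either trivial or infinite cyclic, and because $\ell$ is assumed to project to a closed curve on $S_{g,n}$ (this is the relevant case for Theorem~\ref{geo int}) it is infinite cyclic, generated by a hyperbolic element $h$ whose axis is $\ell$. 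The quotient $\ell/\Gamma_\ell$ is then a circle $\R/\Z$ (after parametrizing $\ell$ by arclength so that $h$ acts by a translation, rescaled to length $1$). The key structural point is that the closure of $\ell$ in the solenoid only feels the group $\Pi_{g,n}$ through the topology it inherits from the pro-$p$ completion: the closure of $\ell\times\{1\}$ inside $\D\times\Pi_{g,n}^{(p)}$, before quotienting, is $\ell\times\ol{\Gamma_\ell}$, where $\ol{\Gamma_\ell}$ is the closure of $\langle h\rangle$ in $\Pi_{g,n}^{(p)}$.

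The next step is to identify $\ol{\Gamma_\ell}$. Since $\Pi_{g,n}^{(p)}$ is a pro-$p$ group, the closed subgroup topologically generated by a single element $h$ is a procyclic pro-$p$ group, hence isomorphic either to $\Z_p$ or to a finite cyclic $p$-group; as $h$ has infinite order and (using that $\Pi_{g,n}$ is residually $p$, so that $h$ maps to an element of infinite order — this follows from the injectivity statement of Theorem~\ref{injective} together with the fact that powers of $h$ are nontrivial in suitable $p$-quotients) the only possibility is $\ol{\Gamma_\ell}\cong\Z_p$, with $h\mapsto 1\in\Z_p$. Therefore the closure of $\ell\times\{1\}$ in $\D\times\Pi_{g,n}^{(p)}$ is $\ell\times\Z_p$ with $\langle h\rangle\cong\Z$ acting diagonally: $z\cdot(r,s)=(z+r,z+s)$ after the arclength identification $\ell\cong\R$. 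I would then check that passing to the $\Pi_{g,n}$-quotient does not enlarge this set — i.e., that no $\Pi_{g,n}$-translate of $\ell\times\{1\}$ other than those already accounted for by $\Gamma_\ell$ meets the closure $\ell\times\Z_p$; this is because a translate $x\cdot(\ell\times\{1\})=x\ell\times\{x\}$ with $x\notin\Gamma_\ell$ has second coordinate $x$, and $x$ lies in $\Z_p=\ol{\Gamma_\ell}$ only if $x\in\Gamma_\ell$ by discreteness of $\Pi_{g,n}$ in $\Pi_{g,n}^{(p)}$ intersected with the procyclic subgroup. Hence
$$\ol{\ell}\;\cong\;\left.(\ell\times\Z_p)\right/\langle h\rangle\;\cong\;\left.\R\times\Z_p\right/\Z\;=\;\mS^1,$$
with the stated $\Z$-action.

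Finally, the assertion that $\ol{\ell}$ has no self-intersection points: a self-intersection would be two distinct points of $\mS^1$ mapping to the same point of $\mS_{g,n}^{(p)}$, i.e.\ two points $(r,s),(r',s')\in\ell\times\Z_p$ in different $\langle h\rangle$-orbits but the same $\Pi_{g,n}$-orbit. But then some $x\in\Pi_{g,n}$ carries $(r,s)$ to $(r',s')$, forcing $x\ell=\ell$ hence $x\in\Gamma_\ell=\langle h\rangle$, a contradiction. So $\ol\ell\hookra\mS_{g,n}^{(p)}$ is injective, and it is a homeomorphism onto its image since $\mS^1$ is compact and $\mS_{g,n}^{(p)}$ is Hausdorff.

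The main obstacle I anticipate is the second step: controlling the interaction between the discrete embedding $\Pi_{g,n}\hookra\Pi_{g,n}^{(p)}$ and the closure of a \emph{single} cyclic subgroup — in particular verifying that $\ol{\Gamma_\ell}\cong\Z_p$ rather than a finite quotient, and that $\ol{\Gamma_\ell}\cap\Pi_{g,n}=\Gamma_\ell$ (no extra elements of $\Pi_{g,n}$ get absorbed into the $p$-adic closure of $\langle h\rangle$). This requires knowing that $\langle h\rangle$ is $p$-separable in $\Pi_{g,n}$ and, more delicately, that its closure in the pro-$p$ topology meets $\Pi_{g,n}$ only in $\langle h\rangle$ itself; I would reduce this to the fact that $\Pi_{g,n}$ is LERF and its cyclic subgroups are separable in the pro-$p$ topology (or cite the relevant separability property used elsewhere in the paper), together with the observation that a maximal cyclic subgroup of a surface group is its own commensurator.
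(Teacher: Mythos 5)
Your skeleton is the same as the paper's: realize $\ol{\ell}$ as the image of $\ell\times\ol{\langle h\rangle}$ in $\D\times\Pi_{g,n}^{(p)}/\Pi_{g,n}$, identify that image with $(\R\times\Z_p)/\Z$, and use compactness of this quotient plus Hausdorffness of the solenoid to see it is closed and hence equals the closure of $\ell$. The problem is that the step you yourself single out as the main obstacle, namely $\ol{\langle h\rangle}\cap\Pi_{g,n}=\langle h\rangle$, is left genuinely open, and the tools you propose do not close it: LERF concerns the full profinite topology and gives nothing for the pro-$p$ topology, while ``cyclic subgroups of $\Pi_{g,n}$ are separable in the pro-$p$ topology'' is literally equivalent to the identity you need (the intersection $\ol{\langle h\rangle}\cap\Pi_{g,n}$ \emph{is} the closure of $\langle h\rangle$ in the pro-$p$ topology of $\Pi_{g,n}$), so citing it is assuming the conclusion; no such separability statement is proved elsewhere in the paper, and the commensurator remark is not needed. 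The paper closes this gap with a two-line argument you should adopt: $\ol{\langle h\rangle}$ is procyclic, hence abelian, so any $x\in\ol{\langle h\rangle}\cap\Pi_{g,n}$ commutes with $h$ in $\Pi_{g,n}^{(p)}$ and therefore in $\Pi_{g,n}$ (injectivity of $\Pi_{g,n}\hookra\Pi_{g,n}^{(p)}$, Theorem~\ref{injective}); since $h$ is primitive, its centralizer in the surface group is $\langle h\rangle$, whence $x\in\langle h\rangle$.

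Two further inaccuracies. First, ``the closure of $\ell\times\{1\}$ inside $\D\times\Pi_{g,n}^{(p)}$, before quotienting, is $\ell\times\ol{\Gamma_\ell}$'' is false as written: $\ell\times\{1\}$ is already closed in the product. The correct statement, which is what the paper proves, lives downstairs: the image of $\ell\times\ol{\Gamma_\ell}$ in the solenoid is compact (being a quotient of $\R\times\Z_p$ by the cocompact $\Z$-action), hence closed, and it contains the image of $\ell$ as a dense subset because $\langle h\rangle$ is dense in $\ol{\Gamma_\ell}$; therefore it equals $\ol{\ell}$. You have the ingredients but should assemble them in this order. Second, in the no-self-intersection paragraph the deduction ``some $x\in\Pi_{g,n}$ carries $(r,s)$ to $(r',s')$, forcing $x\ell=\ell$'' is invalid: an element can carry one point of $\ell$ to another point of $\ell$ without stabilizing $\ell$. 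The correct deduction uses the second coordinate, $x=\beta'\beta^{-1}\in\ol{\Gamma_\ell}\cap\Pi_{g,n}=\langle h\rangle$, which again rests on the intersection identity above; for the same reason, avoid appealing to ``discreteness'' of $\Pi_{g,n}$ in $\Pi_{g,n}^{(p)}$ --- the image is dense, and what you are really invoking there is, once more, that same identity.
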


\begin{proof}There is a non-power hyperbolic element $\gm\in\Pi_{g,n}$ whose action on $\D$
has for axis the given geodesic $\ell$. Let then $\gm^\Z\cong\Z$ be the cyclic
subgroup generated by $\gm$ in $\Pi_{g,n}$.

The geodesic $\ell$ identifies with the subspace $\ell\times\gm^\Z/\gm^\Z$ of the solenoid
$\D\times\hP_{g,n}^\cC/\Pi_{g,n}$. Let us then show that its closure there identifies with the image
of the closed subspace $\ell\times\gm^{\ZZ^\cC}$ of $\D\times\hP_{g,n}^\cC$ in the pro-$\cC$
solenoid and that this image is isomorphic to the quotient of the closed subspace
$\ell\times\gm^{\ZZ^\cC}$ by its stabilizer $\gm^{\ZZ^\cC}\cap\Pi_{g,n}$,
for the action of the group $\Pi_{g,n}$ on the space $\D\times\hP_{g,n}^\cC$ described above.
There is at least a natural continuous map:
$$\varphi_\gm\co\left.\ell\times\gm^{\ZZ^\cC}\right/\gm^\Z
\ra\left.\D\times\hP_{g,n}^\cC\right/\Pi_{g,n}\equiv\mS_{g,n}^\cC.$$

Let us observe that, since the element $\gm\in\Pi_{g,n}$ is non-power, the cyclic subgroup $\gm^\Z$
is self-centralizing in $\Pi_{g,n}$. Therefore, there holds the identity
$\gm^{\ZZ^\cC}\cap\Pi_{g,n}=\gm^\Z$,
otherwise stated, the subgroup $\gm^\Z$ is closed in the pro-$\cC$ topology
of $\Pi_{g,n}$. It follows that the map $\varphi_\gm$ is injective since two points $(d,\beta)$ and
$(d',\beta')$ of $\ell\times\gm^{\ZZ^\cC}$ are mapped to the same point of the $p$-adic solenoid if
and only if there is an $x\in\Pi_{g,n}$ such that $(x\cdot d,x\beta)=(d',\beta')$
and this happens only if $x\in\gm^{\ZZ^\cC}\cap\Pi_{g,n}=\gm^\Z$.

Now, the quotient space $\ell\times\gm^{\ZZ^\cC}/\gm^\Z$ is homeomorphic to
the quotient space $\R\times\ZZ^\cC/\Z$, described in the statement of the proposition, and this is
a compact Hausdorff topological space. Therefore, the map $\varphi_\gm$ is a homeomorphism onto its image
$\Im\,\varphi_\gm$ which then is a closed subset of the pro-$\cC$ solenoid, because this is a
Hausdorff space. Since the image of $\varphi_\gm$ contains the geodesic $\ell$ as a dense subset,
it follows that $\Im\,\varphi_\gm$ is actually the closure of $\ell$ inside the pro-$\cC$ solenoid
$\mS_{g,n}^\cC$.
\end{proof}

We can now improve the cyclic case of a classical result by Scott \cite{Scott1}:

\begin{theorem}\label{closed curve}Let $\gm$ be a non-power, non-peripheral closed curve on the
Riemann surface $S_{g,n}$. Then, for any fixed prime $p$, there is a normal, unramified $p$-covering
$p_L\co S_L\ra S_{g,n}$ such that every elevation of $\gm$ to $S_L$ is
homotopic to a non-separating simple closed curve.
\end{theorem}

\begin{proof}Let us assume that the closed curve $\gm$ is immersed in $S_{g,n}$ and is the
geodesic representative in its homotopy class.
Let $\ell$ be a lift of $\gm$ to $\D$ which is then a hyperbolic line. By Proposition~\ref{closure geo},
the closure $\ol{\ell}$ of this geodesic inside the $p$-adic solenoid $\mS_{g,n}^{(p)}$ is naturally
isomorphic to $\R\times\Z_p/\Z$ and has no self-intersection points.

For a given finite index normal subgroup $K$ of $\Pi_{g,n}$, let us denote by $\gm_K$
the image of $\ell$ in $S_K$. The inverse limit $\ilim_{\sst K\lhd_p\Pi_{g,n}}\gm_K$ is then
naturally identified with the closure $\ol{\ell}$ of the geodesic $\ell$ in the $p$-adic solenoid
$\mS_{g,n}^{(p)}$ and hence it has no self-intersection points.

For a geodesic $\delta$ on a Riemann surface or in the $p$-adic solenoid $\mS_{g,n}^{(p)}$, 
let $\delta\cap_{s}\delta$ be the set of its transversal self-intersection points. There holds then
$$\varprojlim_{\sst K\lhd_p\Pi_{g,n}}(\gm_K\cap_{s}\gm_K)=
\varprojlim_{\sst K\lhd_p\Pi_{g,n}}\gm_K\cap_{s}\varprojlim_{\sst K\lhd_p\Pi_{g,n}}\gm_K=
\ol{\ell}\cap_s\ol{\ell}=\emptyset,$$
where we use the fact that inverse limits commute with inverse limits and, in particular, with intersections.  
Since, for all finite index normal subgroups $K$ of $\Pi_{g,n}$, the set
$\gm_K\cap_{s}\gm_K$ is finite and the inverse limit of a system of non-empty finite sets is
non-empty, we conclude that there is a $p$-open normal subgroup $H$ of $\Pi_{g,n}$
such that there holds $\gm_H\cap_{s}\gm_H=\emptyset$.

By Lemma~3.10 \cite{sym}, there is a characteristic unramified $p$-covering $p_{LH}\co S_L\ra S_H$
such that the connected components of $p_{LH}^{-1}(\alpha)$ are non-separating simple closed curves
for any given non-peripheral s.c.c.\ $\alpha$ on $S_H$. Thus, the induced normal unramified
$p$-covering $p_L\co S_L\ra S_{g,n}$ has the desired properties.
\end{proof}

\begin{remarks}\label{Scott}
\begin{enumerate}
\item With the notations of Theorem~\ref{closed curve}, let $\td{\gm}\in\Pi_{g,n}$ be an
element whose free homotopy class contains $\gm$. In case $\Pi_{g,n}$ is a free group,
i.e.\ for $n\geq 1$, Theorem~\ref{closed curve} follows from
the fact that, by Theorem 5.7 \cite{RZ2}, there is a $p$-open subgroup $H$ of $\Pi_{g,n}$ which
contains the cyclic group $\td{\gm}^\Z$ as a free factor. In the closed surface case instead,
the result is new and can be expressed, group-theoretically, by saying that there is a $p$-open
subgroup $H$ of $\Pi_g$, containing $\td{\gm}$, such that $\td{\gm}$ appears as the
stable letter in a HNN-extension presentation of $H$ (just take $H=\langle L,\td{\gm}\rangle$).

\item The existence of a finite unramified covering with the properties stated in Theorem~\ref{closed curve} 
follows as well from the fact that finitely generated subgroups of a surface group $\Pi_{g,n}$ are 
\emph{geometric}, i.e.\ they can be realized as fundamental groups of 
subsurfaces of finite coverings of $S_{g,n}$ (see \cite{Scott1} and \cite{Scott2}). 
However, in the result of Scott, no condition was imposed on the covering transformation group.
\end{enumerate}
\end{remarks}

In Proposition~\ref{closure geo}, we described the closure in the pro-$\cC$ solenoid of a geodesic
$\ell\subset\D$ which projects to a closed curve in $S_{g,n}$. A much subtler question is to describe
the intersection of the closures of two such geodesics. This is done in the following theorem whose
proof will require a deeper result in the theory of surface groups (Theorem~\ref{intersection}).

\begin{theorem}\label{intersect}Let $\ell$ and $\ell'$ be distinct geodesics of $\D$ which project to
closed curves of $S_{g,n}$ and let, respectively, $\bar\ell$ and $\bar\ell'$ be their closures in the
pro-$\cC$ solenoid $\mS_{g,n}^\cC$. Then, there holds $\bar\ell\cap\bar\ell'=\ell\cap\ell'$, i.e.\ their
intersection is empty when $\ell$ and $\ell'$ are disjoint geodesics of $\D$ and otherwise they meet
in the single point $\ell\cap\ell'$.
\end{theorem}

\begin{proof}For $p\in\Ld_\cC$, there is a normal unramified covering
$\pi_p\co\mS_{g,n}^\cC\ra\mS_{g,n}^{(p)}$ with covering transformation group the kernel of
the natural epimorphism $\hP_{g,n}^\cC\ra\hP_{g,n}^{(p)}$ and the images $\pi_p(\bar\ell)$ and
$\pi_p(\bar\ell')$ are the closures of the geodesics $\ell$ and $\ell'$ in the $p$-adic solenoid.
Therefore, it is enough to prove the theorem for $\cC$ the class of finite $p$-groups,
for $p>1$ a prime.

We can assume that the geodesics $\ell$ and $\ell'$ in $\D$ are the axes, respectively, of two
non-power hyperbolic elements $\td{\gm}$ and $\td{\gm}'$ of $\Pi_{g,n}$, whose free
homotopy classes contain the images of $\ell$ and $\ell'$ in the Riemann surface $S_{g,n}$.

By Proposition~\ref{closure geo}, we then know that the curves $\ol{\ell}$ and $\ol{\ell}'$
in $\mS_{g,n}^{(p)}$ identify with the images of $\ell\times\td{\gm}^{\Z_p}$ and
$\ell'\times\td{\gm}'^{\Z_p}$ in the quotient $\D\times\hP_{g,n}^{(p)}/\Pi_{g,n}\equiv\mS_{g,n}^{(p)}$.

For $s,t\in\Z_p$, the images of two leaves $\ell\times\td{\gm}^s$ and $\ell'\times\td{\gm}'^t$ in the
quotient $\D\times\hP_{g,n}^{(p)}/\Pi_{g,n}$ can possibly intersect only if there exists an
$f\in\Pi_{g,n}$ such that $f\cdot\td{\gm}'^t=\td{\gm}^s$, i.e. only if:
$$f=\td{\gm}^s\td{\gm}'^{-t}\in\Pi_{g,n}\cap\td{\gm}^{\Z_p}\cdot\td{\gm}'^{\Z_p}.$$
If $\td{\gm}'=\td{\gm}^{\pm 1}$, then $\ell=\ell'$, against the hypothesis,
hence $\td{\gm}'\neq\td{\gm}^{\pm 1}$.

For a subgroup $H$ of a group $G$, let us denote by $Z_G(H)$ the centralizer of $H$ in $G$. Since
$\td{\gm}$ and $\td{\gm}'$ are non-power elements of $\Pi_{g,n}$, for all $m\in\Z\ssm\{0\}$, there
hold $Z_{\Pi_{g,n}}(\td{\gm}^{m\Z})=\td{\gm}^\Z$ and
$Z_{\Pi_{g,n}}(\td{\gm}'^{m\Z})=\td{\gm}'^\Z$. Therefore, we have the identities:
$$\td{\gm}^{\Z_p}\cap\td{\gm}'^{\Z_p}=\{1\},\hspace{0.5cm}
\td{\gm}^{\Z_p}\cap\Pi_{g,n}=\td{\gm}^\Z\hspace{0.5cm} \mbox{and}\hspace{0.5cm}
\td{\gm}'^{\Z_p}\cap\Pi_{g,n}=\td{\gm}'^\Z.\hspace{0.5cm}(\ast)$$

Thus, we can apply Theorem~\ref{intersection} and conclude that $f=\td{\gm}^u\td{\gm}'^{-v}$, for some
$u,v\in\Z$. But now, the identity $\td{\gm}^{\Z_p}\cap\td{\gm}'^{\Z_p}=\{1\}$ implies that $f$ admits
a unique expression $f=\td{\gm}^s\td{\gm}'^{-t}$, with $s,t\in\Z_p$. It follows that $s=u,t=v\in\Z$. Thus,
the leaves $\ell\times\td{\gm}^s$ and $\ell'\times\td{\gm}'^t$ are in the $\Pi_{g,n}$-orbit of the the
leaves $\ell\times 1$ and $\ell'\times 1$, respectively, and their image in $\mS_{g,n}^{(p)}$ is contained in
the preferred leaf $\D\times 1$, where their intersection is identified with $\ell\cap\ell'$.
\end{proof}

\begin{proof}[Proof of Theorem~\ref{geo int}.]Before we start with the proof, let us make the following
obvious remark. For a given closed curve $\gm$ on $S_{g,n}$, let $\td{\gm}\in\Pi_{g,n}$ be an
element of the fundamental group whose free homotopy class contains $\gm$. Let $K$ be a
characteristic finite index subgroup of $\Pi_{g,n}$ and let $k>0$, be the smallest integer such that
$\td{\gm}^k\in K$. Then, for a power $\gm^s$ of $\gm$ such that $s$ divides $k$, the pull-backs of
the closed curves $\gm$ and $\gm^s$ to $S_K$ have the same set of components.
More generally, if $\gm^s$ is a power of $\gm$ and $m$ is the g.c.d. of $s$ and $k$, then each
elevation of $\gm^s$ to $S_K$ is an $s/m$-power of one of $\gm$.

Two closed curves on a Riemann surface have non-trivial geometric or algebraic
intersection if and only if any given powers of them have the same property.
By the above remarks, it is then enough to prove Theorem~\ref{geo int} when both $\gm$ and
$\gm'$ are non-power closed curves. If one of the two curves is peripheral, then either $\gm$ is
homotopic to $\gm'$ or the two curves have disjoint representatives in their homotopy classes.
In both cases, the conclusion of the theorem holds trivially.
Let us then assume that neither of the two curves is peripheral and that both closed curves
$\gm$ and $\gm'$ are geodesics for the given metric on $S_{g,n}$. In this way, the geometric
intersection number $|\gm\cap\gm'|_G=0$ if and only if either $\gm$ and $\gm'$ are disjoint or
$\gm=\gm'$ is a s.c.c.. In all these cases, the conclusion of the theorem holds for any covering
$S\ra S_{g,n}$ and there is nothing to prove.

Let us consider the case when $\gm=\gm'$ and $|\gm\cap\gm|_G\neq 0$. By replacing $S_{g,n}$
with a suitable $\cC$-covering $\td{S}\to S_{g,n}$ and $\gm=\gm'$ with two distinct elevations $\td{\gm}$ and
$\td{\gm}'$, such that $|\td{\gm}\cap\td{\gm}'|_G\neq 0$, we are then reduced to consider only the case
when the two given curves $\gm$ and $\gm'$ are a pair of distinct non-power, non-peripheral, closed curves with
non-trivial geometric intersection. Let us assume, moreover, as above that the given curves are geodesic for
the fixed complete metric on $S_{g,n}$.

There are lifts $\ell$ and $\ell'$ of $\gm$ and $\gm'$, respectively, to $\D$
such that $\ell\neq\ell'$ and $\ell\cap\ell'\neq\emptyset$.
These are geodesics in $\D$ and are the axes of two elements $\td{\gm}$ and
$\td{\gm}'$ of $\Pi_{g,n}$ whose free homotopy classes contain, respectively, $\gm$ and $\gm'$.
Then, the axes $\ell$ and $\ell'$ intersect in a single point $P$. Let us denote
by $\ol{P}$ the image of $P$ in the surface $S_{g,n}$.

For a given finite index normal subgroup $K$ of $\Pi_{g,n}$, let us denote by $\gm_K$,
$\gm_K'$ and $P_K$, respectively, the images of $\ell$, $\ell'$ and $P$ in the surface $S_K$.

Let us identify, as in the proof of Theorem~\ref{closed curve}, the inverse limits
$\ilim_{\sst K\lhd_\cC\Pi_{g,n}}\gm_K$ and $\ilim_{\sst K\lhd_\cC\Pi_{g,n}}\gm'_K$ with the closures
$\ol{\ell}$ and $\ol{\ell}'$ of $\ell$ and $\ell'$,
respectively, in the pro-$\cC$ hyperbolic solenoid. By Theorem~\ref{intersect}, there holds:
$$\ol{\ell}\cap\ol{\ell}'=\varprojlim_{\sst K\lhd_\cC\Pi_{g,n}}\gm_K\bigcap
\varprojlim_{\sst K\lhd_\cC\Pi_{g,n}}\gm'_K=\varprojlim_{\sst K\lhd_\cC\Pi_{g,n}}(\gm_K\cap\gm'_K)=P.$$
As above, here, we are using the fact that inverse limits commute with inverse limits and, in particular, 
with intersections.

By Theorem~\ref{closed curve} and the above arguments, there is then a $\cC$-open subgroup $L$ of $\Pi_{g,n}$ 
such that the curves $\gm_L$ and $\gm_L'$ are simple and all their intersection points lie below the point $P$ and, 
in particular, above the point $\ol{P}$. This implies that the intersection indices at these points are all equal.

Therefore, if $\ol{\gm}_L$ and $\ol{\gm}_L'$ are
cycles of the homology group  $H_1(\ol{S}_L,\Z)$ supported on $\gm_L$ and $\gm_L'$,
respectively, there holds $\langle\ol{\gm}_L,\ol{\gm}_L'\rangle_L=k$, for some integer $k\neq 0$,
and the same holds for every $\cC$-open subgroup $K$ of $\Pi_{g,n}$ contained in $L$.
\end{proof}

\section[Algebraic characterization of closed curves]
{An algebraic characterization of simple closed curves via a pro-$\cC$ Reidemeister pairing}\label{group theoretic}

Given a standard presentation $\Pi_g=\langle\alpha_1, \dots \alpha_g,\beta_1,\dots,\beta_g|\,
\prod_{i=1}^g[\alpha_i,\beta_i] \rangle$ of the fundamental group of a closed oriented surface $S_g$,
there are various algorithms which permit to determine whether there is a s.c.c.\ in the free homotopy
class of an element of $\Pi_g$ (for a survey on this subject, see for instance \S 3 of \cite{C-G-K-Z}).

In this section, we show that there is a pairing on the fundamental group $\Pi_g$ with values in the
$p$-adic group ring of the pro-$p$ completion (more generally, in the pro-$\cC$ completion) of $\Pi_g$
whose singular elements are precisely the loops which contain a s.c.c.\ in their free homotopy class. 
This pairing is entirely determined by the group structure of $\Pi_g$.

Let $K$ be a finite index subgroup of $\Pi_g$. The cup product on the first homology group $H_1(K)$ 
can be recovered from the group structure of $K$. Indeed, by the description of the lower central series
of one-relator groups given in \cite{Labute}, there is a short exact sequence:
$$0\ra\Z\sr{\phi}{\ra}K/[K,K]\wedge K/[K,K]\sr{\psi}{\ra}[K,K]/[[K,K]K]\ra 1,$$
where the epimorphism $\psi$ is induced by the assignment
$\alpha{\scriptstyle{\wedge}}\beta\mapsto[\alpha,\beta]$, for all $\alpha,\beta\in K$, and
the monomorphism $\phi$ is defined by $\phi(1)=\sum_{i=1}^{g_K}a_i{\scriptstyle{\wedge}}b_i$,
where $g_K$ is the genus of $S_K$ and $\{a_i,b_i\}_{i=1}^g$ is any set of generators for the
homology group $H_1(S_K,\Z)=K/[K,K]$ for which the identity $\langle a_i,b_j\rangle=\delta_{ij}$,
with $\delta_{ij}$ Kronecker's delta, holds.

Since the integral homology of a closed surface group is torsion free, there is a natural isomorphism
$H^1(K):=H^1(K,\Z)\cong\hom(H_1(K),\Z)$. An easy computation then shows that the cap product on the 
integral cohomology group is the integral dual of $\phi$:
$$\phi^\ast\co H^1(K)\wedge H^1(K)\ra\Z.$$
There is also a canonical isomorphism $H_1(K)\cong\hom(H^1(K),\Z)$. After identifying $H_1(K)$ with
$H^1(K)^\ast:=\hom(H^1(K),\Z)$ by means of the latter isomorphism, the non-degenerate skew-symmetric form 
$\phi^\ast$ on $H^1(K)$ establishes a canonical isomorphism between $H^1(K)$ and $H_1(K)$, defined by 
$x\mapsto x^\ast:=\phi^\ast(x\wedge\_)\in H^1(K)^\ast\equiv H_1(K)$, for $x\in H^1(K)$.
The cup product on $H_1(K)$ is then described by the formula:
$$\langle x^\ast,y^\ast\rangle_K=\phi^\ast(x\wedge y),\hspace{1cm}\mbox{for all }x^\ast,y^\ast\in H_1(K).$$
In this way, we get a description of the cup product on $H_1(K)$ only in terms of the group structure of $K$.

For $K$ a finite index normal subgroup of $\Pi_{g}$, let as usual $G_K:=\Pi_{g}/K$ and let $\Z[G_K]$
be the integral group ring of $G_K$. \emph{The Reidemeister pairing} (cf.\ Section~3 in \cite{Hempel2})
$$\Phi_K\co H_1(K)\times H_1(K)\to\Z[G_K]$$
is defined by $\Phi_K(a,b)=\sum_{h\in G_K}\langle a,h\cdot b\rangle_K h$, for $a,b\in H_1(K)$. 

There is a natural involution $\tau_K\co\Z[G_K]\to\Z[G_K]$, defined extending linearly the map $h\mapsto h^{-1}$
on $G_K$. With respect to the involution $\tau$, the $\Z[G_K]$-valued form $\Phi_K(\_,\_)$ is 
sesquilinear, skew-hermitian and non-degenerate.

By Lemma~3.1 in \cite{Hempel2}, if $K'$ is a finite index normal subgroup of $\Pi_{g}$ contained in $K$, 
there is a commutative diagram of Reidemeister pairings:
$$\begin{array}{clcc}
H_1(K')\!\!\!\!\!&\times H_1(K')&\sr{\Phi_{K'}}{\lra}&\Z[G_{K'}]\\
&\da{\st q_\ast\times q_\ast}&&\,\,\da{\st q_\ast}\\
H_1(K)\!\!\!\!\!&\times H_1(K)&\sr{\Phi_{K}}{\lra}&\Z[G_{K}],
\end{array}$$
where $q_\ast\co H_1(K')\to H_1(K)$ and $q_\ast\co\Z[G_{K'}]\to\Z[G_K]$ are the natural maps.

Let us switch to $\Q_p$ coefficients, so that the natural map $q_\ast\co H_1(K',\Q_p)\to H_1(K,\Q_p)$ is surjective.
For a class of finite groups $\cC$, let us then define:
$$H_1^\cC(\Q_p):=\varprojlim_{\sst K\lhd_{\cC}\Pi_{g}}H_1(K,\Q_p)\hspace{1cm}\mbox{ and }\hspace{1cm}
\Q_p[[\hP_g^\cC]]:=\varprojlim_{\sst K\lhd_{\cC}\Pi_{g}}\Q_p[G_K].$$
The set of involutions $\{\tau_K\}_{K\lhd_\cC\Pi_g}$ also forms an inverse system and its inverse limit is an involution
$\hat{\tau}^\cC\co\Q_p[[\hP_g^\cC]]\to\Q_p[[\hP_g^\cC]]$, which, for $h\in\hP_g^\cC$, is just given by $h\mapsto h^{-1}$.

Taking the inverse limit $\Phi_\cC:=\varprojlim_{\sst K\lhd_{\cC}\Pi_{g}}(\Phi_K\otimes\Q_p)$, then, with respect to 
$\hat{\tau}^\cC$, we get a $\Q_p[[\hP_g^\cC]]$-valued non-degenerate, sesquilinear, skew-hermitian form on $H_1^\cC(\Q_p)$:
$$\Phi_\cC\co H_1^\cC(\Q_p)\times H_1^\cC(\Q_p)\to\Q_p[[\hP_g^\cC]].$$

Let us now show how $\Phi_\cC$ induces a pairing on the pro-$\cC$ surface group $\hP_g^\cC$. Let us denote by
$\wh{K}$ an open normal subgroup of $\hP_g^\cC$ and let $K:=i^{-1}(\wh{K})$, where $i\co\Pi_g\hookra\hP_g^\cC$ is
the natural embedding. For an element $\alpha\in\hP_g^\cC$, let then $\nu_K(\alpha)$ be the minimal natural number such 
that there holds $\alpha^{\nu_K(\alpha)}\in\wh{K}$. Let us define a continuous map
$${\mathfrak h}_\cC\co\hP_g^\cC\to H_1^\cC(\Q_p)$$
as follows. Let $\{\wh{K}_i\}_{i\in\N}$ be a countable nest of open normal subgroups of $\hP_g^\cC$ forming a base of 
neighborhoods of the identity and let ${\mathfrak h}_i\co\wh{K}_i\to H_1(K_i,\Q_p)$ be the natural homomorphism, for $i\in\N$.
Then, let us define, for $\alpha\in\hP_g^\cC$:
$${\mathfrak h}_\cC(\alpha):=\left(\ldots,\dfrac{1}{\nu_{K_i}(\alpha)}{\mathfrak h}_i(\alpha^{\nu_{K_i}(\alpha)}),\ldots,
\dfrac{1}{\nu_{K_1}(\alpha)}{\mathfrak h}_1(\alpha^{\nu_{K_1}(\alpha)})\right)\in H_1^\cC(\Q_p).$$
In Section~\ref{separating}, we will show that the restriction of the map ${\mathfrak h}_\cC$ to $\Pi_g$ is injective 
(cf.\ Remark~\ref{separation3}). Composing the map ${\mathfrak h}_\cC$ with the pairing $\Phi_\cC$, we get a continuous pairing
$${\mathfrak R}_\cC\co\hP_g^\cC\times\hP_g^\cC\to\Q_p[[\hP_g^\cC]],$$
which we call \emph{the pro-$\cC$ Reidemeister pairing on} $\hP_g^\cC$. By restriction, 
we get as well a pairing on the discrete surface group $\Pi_g$. By the definition, it is clear that
the value of ${\mathfrak R}_\cC(\alpha,\beta)$ only depends on the conjugacy classes of $\alpha$ and $\beta$ in 
$\hP_g^\cC$. Therefore, if we denote by $\cL^{or}$ the set of homotopy classes of oriented closed curves on $S_g$,
we get also a well defined pro-$\cC$ Reidemeister pairing:
$${\mathfrak R}_\cC\co\cL^{or}\times\cL^{or}\to\Q_p[[\hP_g^\cC]].$$

\begin{proposition}\label{character}There is a character $\chi_p\co\aut(\hP_g^\cC)\to\Z_p^\ast$ such that, for all $f\in\aut(\hP_g^\cC)$ 
and $\alpha,\beta\in\hP_g^\cC$, there holds  ${\mathfrak R}_\cC(f(\alpha),f(\beta))=\chi_p(f)\cdot{\mathfrak R}_\cC(\alpha,\beta)$.
For $f\in\inn(\hP_g^\cC)$, there holds $\chi_p(f)=1$. Thus, the character $\chi_p$ descends to a character
$\bar\chi_p\co\out(\hP_g^\cC)\to\Z_p^\ast$.
\end{proposition}

\begin{proof}For $\wh{K}$ an open characteristic subgroup of $\hP_g^\cC$, 
let us consider the effect of the automorphism $f$ on the short exact sequence
$$0\ra\Z_p\sr{\phi}{\ra}(\wh{K}/[\wh{K},\wh{K}]\wedge \wh{K}/[\wh{K},\wh{K}])\otimes\Z_p\sr{\psi}{\ra}
([\wh{K},\wh{K}]/[[\wh{K},\wh{K}]\wh{K}])\otimes\Z_p\ra 1.$$
There holds $f_\ast\circ\phi(1)=\sum_{i=1}^{g_K}f_\ast(a_i){\scriptstyle{\wedge}}f_\ast(b_i)=
\chi_p(f)\sum_{i=1}^{g_K}a_i{\scriptstyle{\wedge}}b_i$, for some $\chi_p(f)\in\Z_p^\ast$,
and thus $f_\ast\circ\phi=\chi_p(f)\cdot\phi$. By double duality, there then holds 
$(\Phi_K\otimes\Q_p)\circ f_\ast=\chi_p(f)\cdot(\Phi_K\otimes\Q_p)$. Since open characteristic subgroups
form a base of neighborhoods of the identity of $\hP_g^\cC$, passage to the inverse limit yields the claim of the proposition.

As observed above, the value of ${\mathfrak R}_\cC(\alpha,\beta)$ only depends on the conjugacy
classes of $\alpha$ and $\beta$ in $\hP_g^\cC$. In particular, the pairing is invariant under inner automorphisms
of $\hP_g^\cC$ and so the second part of the proposition follows as well.
\end{proof}

\begin{remark}\label{Galois}If we identify the profinite completion $\wh{\Pi}_g$ of $\Pi_g$ with the \'etale fundamental 
group of $C\times_\Q\Spec(\ol{\Q})$, where $C$ is a projective smooth curve of genus $g$ defined over $\Q$, there is a 
natural faithful representation $\rho_C\co G_\Q\hookra\out(\hP_g)$, where $G_\Q$ is the absolute Galois group 
(cf.\ (ii) Theorem~C in \cite{HM} and Theorem~7.7 in \cite{Boggi}). It is then not difficult to see that the character 
$\bar\chi_p$ restricts on $G_\Q$ to the $p$-adic cyclotomic character. In particular, this implies that both characters 
$\chi_p$ and $\bar\chi_p$ are surjective.
\end{remark}
  
Let ${\mathfrak h}\co K\to H_1(K)$ be the natural map. From the definition of the standard Reidemeister pairing 
$\Phi_K$ given above, it is then clear that, for $\alpha,\beta\in\Pi_g$, the submodules $V_\alpha^K$ and $V_\beta^K$ 
are reciprocally orthogonal if and only if $\Phi_K({\mathfrak h}(\alpha^{\nu_K(\alpha)})),{\mathfrak h}(\beta^{\nu_K(\beta)}))=0$. 
Similarly, $V_\gm^K$ is totally isotropic if and only if $\Phi_K({\mathfrak h}(\gm^{\nu_K(\gm)}),{\mathfrak h}(\gm^{\nu_K(\gm)}))=0$. 
Combining this remark with Theorem~\ref{geo int} and its corollaries, we get the following characterization of simple 
closed curves on $S_g$. For a closed curve $\gm$ on $S_g$, let us denote by $\td{\gm}\in\Pi_g$ an element whose 
free homotopy class contains $\gm$ and by $\vec{\gm}\in\cL^{or}$ the homotopy class of $\gm$ with a fixed orientation:

\begin{theorem}\label{reidemeister}Let $\cC$ be a class of finite groups and $g\geq 2$.
\begin{enumerate}
\item A pair of closed curves $\alpha$ and $\beta$ on the closed Riemann surface $S_g$ have trivial geometric intersection
if and only if there holds ${\mathfrak R}_\cC(\vec{\alpha},\vec{\beta})=0$.
\item The homotopy class of a non-power closed curve $\gm$ on $S_g$ contains a
simple closed curve if and only if there holds ${\mathfrak R}_\cC(\vec{\gm},\vec{\gm})=0$.
\item Let $\alpha$ and $\beta$ be non-homotopic s.c.c.'s on $S_g$.
Then $\alpha$ and $\beta$ are homotopic to s.c.c.'s which meet transversally in a single point
if and only, for some $\td{\alpha},\td{\beta}\in\Pi_g$, there holds 
${\mathfrak R}_\cC([\td{\alpha},\td{\beta}],[\td{\alpha},\td{\beta}])=0$.
\end{enumerate}
\end{theorem}

A consequence of Theorem~\ref{reidemeister} is that the topology of the closed Riemann surface
$S_g$ is entirely determined by the natural embedding $\Pi_g\hookra\hP_g^\cC$ and by the algebraic structure
of $\hP_g^\cC$, for any class $\cC$ of finite groups.

\section{Distinguishing closed curves}\label{separating}
As a first consequence of Theorem~\ref{geo int}, we have the following result:

\begin{theorem}\label{separation}Let $\gm$ and $\gm'$ be closed curves not homotopic to powers of
peripheral s.c.c.'s on the Riemann surface $S_{g,n}$ and let $p\in\N$ be a fixed prime. Then, the two curves
are non-homotopic, if and only if, there is a normal, unramified $p$-covering $p_K\co S_K\ra S_{g,n}$
such that the $G_K$-invariant submodules $V^K_\gm$ and $V^K_{\gm'}$ of $H_1(\ol{S}_K)$ are distinct.
\end{theorem}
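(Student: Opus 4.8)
The converse implication is immediate: homotopic closed curves have homotopic pull-backs to every covering $p_K\co S_K\ra S_{g,n}$, hence determine the same submodules $V_{K,\gm}=V_{K,\gm'}$. So assume $\gm$ and $\gm'$ are non-homotopic, and write $\gm$ as homotopic to $c^m$ and $\gm'$ to $c'^{m'}$, with $c,c'$ primitive non-peripheral simple closed curves and $m,m'\geq 1$; since the geometric intersection number is multiplicative in powers one has $|\gm\cap b|_G=m\,|c\cap b|_G$ and $|\gm'\cap b|_G=m'\,|c'\cap b|_G$ for every closed curve $b$, and $\gm$ is homotopic to $\gm'$ exactly when $c$ is homotopic to $c'$ and $m=m'$. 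The argument then splits according to whether the underlying simple closed curves agree.

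\textbf{Main case: $c$ not homotopic to $c'$.} The plan is to produce a simple closed curve $b$ on $S_{g,n}$ which, after possibly interchanging $\gm$ and $\gm'$, is disjoint from $c$ but crosses $c'$ essentially, i.e. $|c\cap b|_G=0$ and $|c'\cap b|_G\neq 0$. If $c$ and $c'$ cannot be realized disjointly, take $b=c$. If they can, cut $S_{g,n}$ along $c$: since $c'\neq c$ and $c'$ is non-peripheral in $S_{g,n}$, the curve $c'$ remains non-peripheral in the cut surface, so there is an essential simple closed curve $b$ in the cut surface with $|c'\cap b|_G>0$; such a $b$ is automatically disjoint from $c$ and non-peripheral in $S_{g,n}$. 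In either case $|\gm\cap b|_G=0$ while $|\gm'\cap b|_G=m'\,|c'\cap b|_G\neq 0$. Now apply Theorem~\ref{geo int} twice, for the fixed prime $p$. Applied to $(\gm,b)$, whose geometric intersection vanishes, it gives: for \emph{every} normal subgroup $K$ of $\Pi_{g,n}$ of $p$-power index, $\langle x,y\rangle_K=0$ for all $x\in V_{K,\gm}$ and all $y\in V_{K,b}$. Applied to $(\gm',b)$, whose geometric intersection is nonzero, it gives: for all normal subgroups $K$ of $p$-power index that are small enough, there are $x'\in V_{K,\gm'}$ and $y'\in V_{K,b}$ with $\langle x',y'\rangle_K\neq 0$. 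Fixing a $K$ small enough to satisfy both conclusions, we get $V_{K,\gm}\neq V_{K,\gm'}$: otherwise $x'$ would lie in $V_{K,\gm}$ and pair trivially with every element of $V_{K,b}$, contradicting $\langle x',y'\rangle_K\neq 0$. All the subgroups produced are normal of $p$-power index, hence correspond to normal unramified $p$-coverings, which settles this case.

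\textbf{Remaining case: $c$ homotopic to $c'$, $m\neq m'$.} Here no closed curve separates $\gm$ and $\gm'$ geometrically, and one argues directly from Definition~\ref{submodule}. Using the description of $V_{K,\gm}$ as the image in $H_1(\ol{S}_K)$ of $\langle\vec{\gm}^{\,k}\rangle^{\Pi_{g,n}}\cap K$, a short computation shows that for $K$ deep enough $V_{K,\gm}=m_1 V_{K,c}$ and $V_{K,\gm'}=m_1' V_{K,c}$, where $m_1,m_1'$ denote the prime-to-$p$ parts of $m$ and $m'$; since $V_{K,c}\neq 0$ for $K$ deep enough ($c$ being non-peripheral) and $H_1(\ol{S}_K)$ is a free $\Z$-module, $m_1\neq m_1'$ forces $m_1 V_{K,c}\neq m_1' V_{K,c}$, and the residual subcase, where $m$ and $m'$ differ only by a power of $p$, is handled by passing to a covering on which the two curves have components of different orders.

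\textbf{Main obstacle.} The crux is the surface-topology input of the main case — producing the simple closed curve $b$ disjoint from one of the two curves and essentially crossing the other — together with the bookkeeping needed to carry out the two applications of Theorem~\ref{geo int} on a common covering, which relies on its closing assertion that the conclusion persists after passing to finer coverings.
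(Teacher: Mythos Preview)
Your main case (underlying s.c.c.'s $c\not\sim c'$) is correct and in fact more explicit than the paper's version. The paper argues by contrapositive: assuming $V_{K,\gm}=V_{K,\gm'}$ for all $K$, it observes that $V_{K,\gm}$ is isotropic (Corollary~\ref{s.c.c.1}, since $\gm$ is a power of a s.c.c.), hence the pairing between $V_{K,\gm}$ and $V_{K,\gm'}$ vanishes for every $K$, and Theorem~\ref{geo int} gives $|\gm\cap\gm'|_G=0$; it then asserts without further detail that this forces $c\sim c'$. Your auxiliary curve $b$ (disjoint from $c$, essentially meeting $c'$) is exactly what is needed to justify that assertion, so the two arguments are the same idea organised in opposite directions.

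Your remaining case ($c\sim c'$, $m\neq m'$), however, has a genuine gap. The formula $V_{K,\gm}=m_1\,V_{K,c}$ is valid only once the order $p^b$ of $\vec c$ in $\Pi_{g,n}/K$ is at least the $p$-part of $m$; for such ``deep enough'' $K$ the $p$-parts of $m$ and $m'$ become invisible, and when $m_1=m_1'$ one gets $V_{K,\gm}=V_{K,\gm'}$ for \emph{all} sufficiently deep $K$. So your residual subcase requires a \emph{shallow} covering rather than a deep one, and the phrase ``passing to a covering on which the two curves have components of different orders'' does not say how to produce one --- the difficulty being precisely when $c$ is null-homologous in $S_g$, since then the trivial cover $K=\Pi_{g,n}$ gives $V_{K,c}=0$. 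The paper avoids the $p$-part/prime-to-$p$ split altogether: it finds (citing Lemma~3.10 of \cite{sym}) a normal $p$-covering $K$ with $\vec c\in K$ (so every component of the pull-back of $c$ maps bijectively onto $c$) and with these components homologically nontrivial in $\ol{S}_K$; then directly $V_{K,\gm}=m\,V_{K,c}$ and $V_{K,\gm'}=m'\,V_{K,c}$, which differ whenever $m\neq m'$.
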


\begin{proof}One implication is obvious. So let us suppose that, for all normal, finite, unramified
$p$-coverings $p_K\co S_K\ra S_{g,n}$, there holds $V^K_\gm=V^K_{\gm'}$. In particular,
there holds $\langle x,y\rangle_K=0$, for all $x\in V^K_\gm$, $y\in V^K_{\gm'}$ and all
$p$-open normal subgroups $K$ of $\Pi_{g,n}$.

From Theorem~\ref{geo int}, it follows that $\gm$ and $\gm'$ have trivial geometric intersection.
But then, for powers of non-peripheral s.c.c.'s, with trivial geometric intersection, the above condition
implies that $\gm$ and $\gm'$ are homotopic to some powers of the same s.c.c.\ $\td{\gm}$.
Let us then assume that the closed curve $\gm$ is homotopic to a power $\td{\gm}^s$ of $\td{\gm}$
and the closed curve $\gm'$ is homotopic to a power $\td{\gm}^t$ of $\td{\gm}$.

If $\td{\gm}$ is homologically non-trivial on the closed surface $S_g=\ol{S}_{g,n}$, the
homology classes of $\gm$ and $\gm'$ generate the same submodule of $H_1(S_g)$ only if $s=t$
and the curves $\gm$ and $\gm'$ are then homotopic.

If instead $\td{\gm}$ is homologically trivial on the closure $S_g$ of $S_{g,n}$, the s.c.c.\ $\td{\gm}$ is
separating and bounds on each side a subsurface of negative Euler characteristic (because $\td{\gm}$
is non-peripheral). It is then not difficult to see (cf.\ the proof of Lemma~3.10 \cite{sym}) that there is an
abelian, unramified, $p$-covering $p_K\co S_K\ra S_{g,n}$ such that all connected components of
$p_K^{-1}(\td{\gm})$ are non-separating and are mapped bijectively onto $\td{\gm}$ by $p_K$.
In particular, they have non-trivial integral homology classes in $H_1(\ol{S}_K)$. As above,
$V^K_\gm=V^K_{\gm'}$ then implies that $s=t$ and that $\gm$ and $\gm'$ are homotopic.
\end{proof}

\begin{remark}\label{peripheral}In terms of the homology of $p$-coverings, peripheral closed curves 
on $S_{g,n}$ are characterized by the property that, for some fixed $p$ and every $p$-open normal 
subgroup $K$, there holds $V^K_\gm=\{0\}$. The necessity of the condition is obvious. That it is also 
sufficient immediately follows from Theorem~\ref{geo int} and the fact that having trivial geometric 
intersection with any closed curve on $S_{g,n}$ characterizes peripheral closed curves.
In particular, in terms of the homology of normal $p$-coverings it is possible to distinguish a peripheral
curve from a non-peripheral curve but not a peripheral curve from another.
\end{remark}

For arbitrary closed curves, we then derive the following weaker separation property:

\begin{theorem}\label{separation2}Let $\alpha$ and $\beta$ be non-peripheral closed curves on
$S_{g,n}$. Then, the two curves are non-homotopic if and only if, for any fixed prime $p$, there is
a normal, unramified $p$-covering $p_K\co S_K\ra S_{g,n}$ such that every cycle of $H_1(\ol{S}_K)$
supported on an elevation of $\alpha$ to $S_K$ is distinct from every cycle supported
on an elevation of $\beta$ to $S_K$.
\end{theorem}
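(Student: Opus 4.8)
The plan is to reduce Theorem~\ref{separation2} to the case already handled in Theorem~\ref{separation}, namely closed curves homotopic to powers of non-peripheral simple closed curves, by passing to a suitable finite unramified $p$-covering where the curves become simple. First I would dispose of the trivial implication: if $\alpha$ and $\beta$ are homotopic, then for every covering $p_K$ their pull-backs have the same components, hence the same supported cycles, so no separating covering exists. For the substantive direction, assume $\alpha$ and $\beta$ are non-homotopic and non-peripheral; I want to produce a $p$-covering separating the supported cycles. Replacing $\alpha$ and $\beta$ by their primitive roots changes nothing essential (a component of the pull-back of a power is a power of a component of the pull-back of the root, and by the remarks preceding the proof of Theorem~\ref{geo int} the sets of components are controlled), so I may assume both are non-power closed curves, and I take them to be geodesics for the fixed metric.

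The key step is to invoke Lemma~\ref{closed curve}: for the fixed prime $p$ there is a normal unramified $p$-covering $p_L\co S_L\ra S_{g,n}$ such that every component of the pull-back of $\alpha$ and of $\beta$ to $S_L$ is homotopic to a simple closed curve. Each such component $\tilde\alpha$ is then a non-peripheral s.c.c.\ on $S_L$ (peripherality would force $\alpha$ peripheral on $S_{g,n}$, since the covering is unramified and a component lying over a non-peripheral curve cannot bound a one-punctured disc), and similarly for the components of $\beta$. Now two cases arise. If some component $\tilde\alpha$ of the pull-back of $\alpha$ is non-homotopic to every component $\tilde\beta$ of the pull-back of $\beta$, then I apply Theorem~\ref{separation} to each pair $(\tilde\alpha,\tilde\beta)$ of non-homotopic s.c.c.'s (or powers of such) on $S_L$: it yields a normal unramified $p$-covering of $S_L$ on which the associated submodules — hence in particular the sets of supported cycles — differ; taking a common normal $p$-refinement $S_K$ over all finitely many such pairs, and recalling that supported cycles and the submodules $V$ only grow under pull-back, gives a covering $S_K\ra S_{g,n}$ on which no cycle supported on a component of $\alpha$ equals one supported on a component of $\beta$.

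The remaining case is that the set of homotopy classes of components of the pull-back of $\alpha$ to $S_L$ coincides, as a set, with that of $\beta$; I must show this cannot happen when $\alpha\not\simeq\beta$ — or rather, that even then a further covering separates the cycles. The cleanest route is to use the $G_L$-action: the deck group acts transitively on the components of the pull-back of $\alpha$ and transitively on those of $\beta$, and the stabilizer of a component of $\alpha$ corresponds to the subgroup $\langle\vec\alpha^{\,a}\rangle^{\Pi_{g,n}}$-data, precisely $(L\colon\langle\vec\alpha\rangle)$ worth of cosets; if the two orbits of components were identical as subsets of the set of free homotopy classes on $S_L$, transitivity of $G_L$ would force $\vec\alpha$ and $\vec\beta$ to be conjugate in $\Pi_{g,n}$ up to the relevant powers, i.e.\ $\alpha\simeq\beta$, a contradiction — so in fact this case is vacuous. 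I expect the main obstacle to be this last point: making precise, via the correspondence in Definition~\ref{submodule} and the paragraph following it between components of pull-backs and double cosets $\langle\vec\alpha\rangle\backslash\Pi_{g,n}/L$, why equality of the component sets upstairs forces homotopy downstairs, and checking that the non-peripherality and non-power reductions are preserved throughout. Once that is settled, the theorem follows by combining Lemma~\ref{closed curve}, Theorem~\ref{separation}, and the monotonicity of supported cycles under refinement of coverings.
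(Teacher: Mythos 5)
Your overall strategy is the paper's: pass via Lemma~\ref{closed curve} to a $p$-covering $S_L$ on which all components of the pull-backs of $\alpha$ and $\beta$ become (powers of) simple closed curves, observe that no component coming from $\alpha$ can be homotopic to one coming from $\beta$, and then invoke Theorem~\ref{separation}. But there is a genuine gap in your very first reduction. Replacing $\alpha$ and $\beta$ by their primitive roots does \emph{not} ``change nothing essential'': if $\alpha$ and $\beta$ are distinct powers $\td{\gm}^s$ and $\td{\gm}^t$ ($s\neq t$) of one and the same primitive closed curve, they are non-homotopic but their primitive roots are homotopic, so after your replacement the two curves coincide and there is nothing left to separate --- yet the theorem still demands a covering separating the cycles of the original curves, which now differ only as distinct multiples of the same classes. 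This is precisely the case the paper splits off at the start (``a power of one curve is homotopic to a power of the other'') and treats by the argument of Theorem~\ref{separation}: one passes, if necessary through at most two abelian $p$-coverings, to a covering where the relevant components have non-trivial integral homology classes, so that $s$-fold and $t$-fold multiples of a non-zero class differ. Your proposal never confronts the need to distinguish distinct multiples of the same cycle, and your ``case (b) is vacuous'' claim is correct only under the primitivity assumption that this faulty reduction was supposed to secure.

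The reduction is shaky even when the primitive roots are non-homotopic: the cycles supported on components of the pull-back of a power are integer multiples of the cycles of the components of the root, and a coincidence $m\cdot c_\alpha=m'\cdot c_\beta$ of such multiples is not formally excluded by distinctness of the $\Z$-submodules attached to the roots (distinct integral submodules can share multiples of their generators). The paper avoids this by applying Theorem~\ref{separation} directly to the components produced by Lemma~\ref{closed curve} --- its hypothesis already covers powers of non-peripheral s.c.c.'s --- so the submodules compared are spanned by the actual cycles in question, and a single coincidence of cycles would, by transitivity of the deck action, force equality of those submodules. Also, your appeal to ``monotonicity of supported cycles under refinement'' when taking a common covering for the finitely many pairs is not automatic (distinctness of submodules at one level is not formally inherited at deeper levels) and needs an argument or a different organization. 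If you drop the passage to primitive roots, keep the paper's dichotomy (some power of $\alpha$ homotopic to some power of $\beta$, or not), and apply Theorem~\ref{separation} to the power-of-s.c.c.\ components themselves, your argument becomes the paper's proof; the remaining points (non-peripherality of the components upstairs, the push-down argument showing components are pairwise non-homotopic) are fine as you state them.
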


\begin{proof}The case when a power of one curve is homotopic to a power of the other can be
treated as in the proof of Theorem~\ref{separation}. Let us then assume that no power of one
curve is homotopic to a power of the other.

By Theorem~\ref{closed curve}, there is a $p$-open
normal subgroup $L$ of $\Pi_{g,n}$ such that each component of
the pull-backs of $\alpha$ and $\beta$ to $S_L$ is homotopic to a power of a simple closed
curve. Since no power of $\alpha$ is homotopic to a power of $\beta$, all components of
the pull-backs of $\alpha$ and $\beta$ to $S_L$ are pairwise non-homotopic.

At this point, we can apply Theorem~\ref{separation} and conclude that there is a characteristic,
unramified $p$-covering $p_{KL}\co S_K\ra S_L$ such that, if $\gm_\alpha$ and
$\gm_\beta$ are, respectively, elevations of $\alpha$ and $\beta$ to $S_L$,
the submodules $V_{K,L,\gm_\alpha}$ and $V_{K,L,\gm_\beta}$ of $H_1(\ol{S}_K)$,
generated by the cycles supported, respectively, on the elevations of
$\gm_\alpha$ and $\gm_\beta$ from $S_L$ to $S_K$, are distinct.

Let $G_{K,L}$ be the Galois group of the covering $p_{KL}\co S_K\ra S_L$.
If some cycle $\td{\alpha}$ of $H_1(\ol{S}_K)$, supported on an elevation of
$\alpha$ to $S_K$, were in the same homology class of a cycle $\td{\beta}$, supported
on an elevation of $\beta$ to $S_K$, this would imply that the $G_{K,L}$-orbit
of $\td{\alpha}$ generates the same submodule of $H_1(\ol{S}_K)$ as the $G_{K,L}$-orbit of
$\td{\beta}$, in contrast with the above conclusion.
\end{proof}

\begin{remark}\label{separation3}By Lemma~3.10 \cite{sym}, if the subgroup $K$ of Theorem~\ref{separation2} 
is contained in $[\Pi_g,\Pi_g]\Pi_g^\ell$, for some integer $\ell\geq 2$, then, for $\gm$ a s.c.c.\ on $S_g$, 
the inverse image $p_K^{-1}(\gm)$ does not contain separating curves or cut pairs. So that two cycles of 
$H_1(S_K)$ supported on distinct connected component of $p_K^{-1}(\gm)$ are also distinct. Together with 
Theorem~\ref{separation2}, this implies that the map ${\mathfrak h}_\cC\co\Pi_g\to H_1^\cC(\Q_p)$, 
defined in Section~\ref{group theoretic}, is injective.
\end{remark}

Let $p$ be a prime. A group $G$ is conjugacy $p$-separable if, whenever $x$ and $y$ are
non-conjugate elements of $G$, there exists some finite $p$-quotient of G in which the images of $x$
and $y$ are non-conjugate.

An almost immediate consequence of Theorem~\ref{separation2} is conjugacy $p$-separability
of fundamental groups of oriented Riemann surface. This was well known for open surfaces but,
in the closed surface case, it was proved only recently by Paris \cite{Paris}.

\begin{theorem}\label{p-separable}The fundamental group $\Pi_{g,n}$ of an oriented surface is
conjugacy $p$-separable.
\end{theorem}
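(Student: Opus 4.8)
The plan is to derive conjugacy $p$-separability of $\Pi_{g,n}$ directly from Theorem~\ref{separation2}, essentially by translating the geometric ``distinguishing closed curves'' statement into the required group-theoretic statement about conjugacy classes and finite $p$-quotients. First I would recall the dictionary between conjugacy in $\Pi_{g,n}$ and free homotopy classes of closed curves: two elements $x,y\in\Pi_{g,n}$ are conjugate (up to inversion, which is harmless since a closed curve and its reverse are distinguished in the same way by their pull-backs) precisely when the closed curves $\gm_x$ and $\gm_y$ they represent are freely homotopic. So the hypothesis ``$x$ and $y$ are non-conjugate'' becomes ``$\gm_x$ and $\gm_y$ are non-homotopic closed curves.''

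Next I would handle the peripheral case separately, since Theorem~\ref{separation2} assumes non-peripherality. If $x$ is peripheral (a power of a loop around a puncture) and $y$ is not, or vice versa, or if both are peripheral but around different punctures or with different winding numbers, these are easily separated in a finite $p$-quotient: the abelianization of $\Pi_{g,n}$ already detects winding numbers modulo any $p$-power, and a suitable quotient by a characteristic $p$-power-index subgroup separates the conjugacy classes of distinct peripheral elements. (One can also observe that for open surfaces $n\geq 1$ conjugacy $p$-separability was already known, and the substance of the theorem is the closed case $n=0$, where there are no peripheral curves to worry about.) Thus I may assume $\gm_x,\gm_y$ are both non-peripheral and non-homotopic.

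Then I would apply Theorem~\ref{separation2} to get a normal, unramified $p$-covering $p_K\co S_K\ra S_{g,n}$, corresponding to a normal subgroup $K\lhd\Pi_{g,n}$ of $p$-power index, such that no cycle of $H_1(\ol S_K)$ supported on a component of the pull-back of $\gm_x$ coincides with one supported on a component of the pull-back of $\gm_y$. The key point is that the submodules $V_{K,\gm_x}$ and $V_{K,\gm_y}$ — and indeed the finer data of which individual cycles occur — depend only on the images of $x$ and $y$ in the finite $p$-group $Q=\Pi_{g,n}/K$ together with the action of $Q$ on $H_1(\ol S_K)$; more precisely, the set of components of the pull-back and their homology classes are read off from the double coset / conjugacy data of the image of $x$ in $Q$ acting on $H_1(\ol S_K)\otimes$(something), which is itself a finite $p$-group-theoretic gadget. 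Concretely, I would replace $K$ by a possibly smaller characteristic subgroup so that $H_1(\ol S_K)/p^m$ already distinguishes the relevant cycles for some $m$, and then take as the separating finite $p$-quotient the group $\Pi_{g,n}/K'$ where $K'$ is the kernel of the action of $\Pi_{g,n}$ on $H_1(\ol S_K)/p^m$ combined with the projection to $Q$; this is a finite $p$-group because it is an extension of a $p$-group by a subgroup of a matrix group over $\Z/p^m$ which one arranges to be a $p$-group. If $x$ and $y$ had the same image in this quotient, the induced permutation-plus-homology data of their pull-backs would agree, contradicting the choice of $K$.

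The main obstacle is precisely the last step: verifying that the invariant singled out by Theorem~\ref{separation2} (non-coincidence of homology classes of pull-back components) is genuinely a function of the image of the element in some \emph{finite $p$-quotient}, and not merely of the covering. One must be careful that the relevant matrix group $\aut(H_1(\ol S_K)/p^m)$ need not be a $p$-group, so one cannot naively take the full kernel of the homology action; instead one passes to a congruence-type subgroup (kernel of reduction mod $p$, or the $p$-Sylow's worth of it) to stay within the category of finite $p$-groups, and checks that $m$ can be chosen large enough that mod-$p^m$ homology still separates the cycles produced by Theorem~\ref{separation2}. Once this bookkeeping is in place, the theorem follows: given non-conjugate $x,y$, the constructed finite $p$-quotient of $\Pi_{g,n}$ has non-conjugate images, which is exactly conjugacy $p$-separability.
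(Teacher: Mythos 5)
Your overall strategy (translate conjugacy into free homotopy, dispose of the peripheral case separately, and then feed Theorem~\ref{separation2} into a homological invariant read off from a finite $p$-quotient) is the same as the paper's, and the first two steps are fine. The genuine gap is exactly at the point you flag as ``the main obstacle'', and your proposed fix does not work. The quotient you construct is $\Pi_{g,n}/K'$ with $K'$ the kernel of the map to $Q=\Pi_{g,n}/K$ combined with the action on $H_1(\ol{S}_K)\otimes\Z/p^m$. But conjugation by elements of $K$ acts trivially on $H_1(K)$, hence on its quotient $H_1(\ol{S}_K)$, so the action of $\Pi_{g,n}$ on $H_1(\ol{S}_K)\otimes\Z/p^m$ already factors through $Q$; your $K'$ is just $K$ and your quotient is just $Q$, in which the images of $x$ and $y$ may perfectly well be conjugate. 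The invariant that Theorem~\ref{separation2} hands you is not ``how $x$ acts on homology'' but ``where the minimal power $x^s\in K$ sits in the homology of the covering, up to the $G_K$-action'', and no congruence-subgroup manipulation inside $\aut\bigl(H_1(\ol{S}_K)\otimes\Z/p^m\bigr)$ can recover that datum, because it is not a statement about the action at all. (Your worry that the automorphism group is not a $p$-group is thus a symptom of aiming at the wrong quotient rather than a technical nuisance to be engineered around.)

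The paper's resolution is simpler and is the step you are missing: after arranging (as you do) that the relevant cycles are already distinguished in $H_1(K,\Z/p^m)$ for some $m$ (note one passes from $H_1(\ol{S}_K)$ to $H_1(K)=H_1(S_K)$, of which it is a quotient, and one may assume the minimal exponents satisfy $s=t$, since otherwise $\Pi_{g,n}/K$ itself separates $x$ and $y$), one takes the subgroup $[K,K]K^{p^m}$, which is characteristic in $\Pi_{g,n}$ and of $p$-power index. The quotient $\Pi_{g,n}/[K,K]K^{p^m}$ is automatically a finite $p$-group, being an extension of $G_K$ by $K/[K,K]K^{p^m}\cong H_1(K,\Z/p^m)$, and conjugation in it by lifts of elements of $G_K$ realizes precisely the $G_K$-action on $H_1(K,\Z/p^m)$. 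Hence if the images $\tilde{\alpha}$, $\tilde{\beta}$ of $x^s$, $y^s$ lie in distinct $G_K$-orbits of $H_1(K,\Z/p^m)$, then $x^s$ and $y^s$, and therefore $x$ and $y$, have non-conjugate images in this finite $p$-quotient. In short: the homology of the cover must appear \emph{inside} the finite $p$-quotient as the image of $K$, not as a coefficient module the quotient acts on; with that replacement your argument becomes the paper's proof.
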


\begin{proof}If $\Pi_{g,n}$ is abelian, the result is trivial. So, let us assume $2g-2+n>0$ and
let be given $\alpha,\beta\in\Pi_{g,n}$ belonging to distinct conjugacy classes.

Let us consider first the case when both $\alpha$ and $\beta$ contain peripheral curves
in their homotopy classes bounding, respectively, the punctures $P_{i_1}$ and $P_{i_2}$, with
$1\leq i_1,i_2\leq n$. Since their conjugacy classes are distinct, either $i_1\neq i_2$ and
$n\geq 2$, or $i_1=i_2$ and $\alpha^s$ is conjugated to $\beta^t$, for some $s\neq t\in\N$.

Let $N$ be the kernel of the natural epimorphism $\Pi_{g,n}\ra\Pi_{g}$ induced by filling in the
punctures of $S_{g,n}$ and let $L:=N[\Pi_{g,n},\Pi_{g,n}]\Pi^p_{g,n}$, then there holds
$\alpha,\beta\in L$. If $n\geq 1$, the surface $S_L$ associated to $L$ has at least two punctures,
otherwise, if $n\geq 2$, the surface $S_L$ has at least three punctures. In any case, the loops
$\alpha$ and $\beta$ lift to loops on $S_L$ which define distinct non-trivial homology
classes $\bar\alpha$ and $\bar\beta$ in $H_1(S_L,\Z/p^s)=H_1(L,\Z/p^s)$, for some $s\in\N^+$.
An element of $\Pi_{g,n}$ in the conjugacy class of $\alpha$ lifts in $S_L$ to a peripheral loop bounding
a puncture lying above $P_{i_1}$ and determines a cycle of $H_1(S_L,\Z/p^s)$
in the $G_L$-orbit of $\bar\alpha$. Moreover, all elements in the $G_L$-orbit of $\bar\alpha$ are
of this type.

It is then clear that, whether or not $i_1\neq i_2$, the images of $\alpha$ and $\beta$ in the homology
group $H_1(L,\Z/p^s)$ are in distinct $G_L$-orbits. Therefore, also their images in the finite $p$-quotient
$\Pi_{g,n}/[L,L]L^{p^s}$, which contains  $L/[L,L]L^{p^s}\cong H_1(L,\Z/p^s)$ as a subgroup, are
non-conjugate.

Let us then consider the case in which one of the two elements, say $\alpha$, contains a peripheral
curve in its homotopy class and $\beta\in [\Pi_{g,n},\Pi_{g,n}]$.
Let $L$ be defined as above. Then, there holds as well $\alpha,\beta\in L$ and their images
$\bar\alpha$ and $\bar\beta$ in $H_1(L,\Z/p)$ are in distinct $G_L$-orbits. As above, it follows
that the conjugacy classes of $\alpha$ and $\beta$ are separated by the finite $p$-quotient
$\Pi_{g,n}/[L,L]L^p$.

The case in which one of the two elements, say $\alpha$, contains a peripheral curve in its homotopy
class and $\beta$ instead a non-separating curve is straightforward.

Let us then assume that neither of the two elements contain a peripheral curve in its
homotopy class. By Theorem~\ref{separation2},
there is a $p$-open characteristic subgroup $K$ of $\Pi_{g,n}$
and a positive integer $m$ such that the $G_K$-orbits of $\tilde{\alpha}$ and $\tilde{\beta}$ in
$H_1(K,\Z/p^m)$ are distinct, where $\tilde{\alpha}$ and $\tilde{\beta}$ denote the images
in $H_1(K,\Z/p^m)$ of the minimal positive powers $\alpha^s$ and $\beta^t$ contained in $K$.
We can assume that $s=t$, otherwise it is already clear that the images of $\alpha$ and $\beta$
in the $p$-quotient $\Pi_{g,n}/K$ are not in the same conjugacy class.

The subgroup $[K,K]K^{p^m}$ of $K$ generated by commutators and $p^m$ powers is a $p$-open
characteristic subgroup of $\Pi_{g,n}$, such that the image of $K$ in
the $p$-group $\Pi_{g,n}/[K,K]K^{p^m}$ is naturally isomorphic to the homology group
$H_1(K,\Z/p^m)$. Therefore, the images $\tilde{\alpha}$ and $\tilde{\beta}$ of $\alpha^s$ and
$\beta^s$ in $\Pi_{g,n}/[K,K]K^{p^m}$ belong to distinct conjugacy classes.
The same then is true for the images there of $\alpha$ and $\beta$.
\end{proof}

\appendix

\section{Appendix on pro-$p$ surface groups}
In this appendix, we prove two basic results on  pro-$p$ surface groups which
are used in the paper. Let us then denote, as above, by $\Pi:=\pi_1(S,\ast)$ the fundamental group
of an oriented hyperbolic Riemann surface $S$ of finite type and by $\hP^{(p)}$ its pro-$p$ completion.

A first basic property of the pro-$p$ completion of a surface group is given in the following theorem.
Although this is well known, we prefer to adapt here to the pro-$p$ case the simple geometric proof by Hempel
(cf.\ \cite{Hempel}) that surface groups are residually finite, which is more in the spirit of the present paper:

\begin{theorem}\label{injective}The canonical homomorphism $\Pi\ra\hP^{(p)}$ is injective.
Otherwise stated, oriented surface groups are residually $p$-groups for all primes $p\geq 2$.
\end{theorem}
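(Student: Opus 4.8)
The plan is to reduce the statement to two standard facts about surface groups. First, if $S$ is an open hyperbolic surface, then $\Pi=\pi_1(S,\ast)$ is a finitely generated free group, and free groups are well known to be residually $p$ for every prime $p$ (one may cite Magnus, or give the quick argument via the lower central $p$-series of a free group, using that the associated graded is a free restricted Lie algebra over $\mathbb F_p$ and hence torsion-free, so the intersection of the terms of the $p$-series is trivial). So the only genuine case is $S=S_g$ closed of genus $g\geq 2$, where $\Pi=\Pi_g$ has the standard one-relator presentation $\langle \alpha_1,\dots,\alpha_g,\beta_1,\dots,\beta_g\mid \prod_{i=1}^g[\alpha_i,\beta_i]\rangle$.

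For the closed case I would argue as follows. Pick $1\neq w\in\Pi_g$; we must produce a finite $p$-quotient in which $w$ survives. The key geometric input is that a closed surface group is the fundamental group of a compact graph of groups / has lots of finite-index subgroups that split as free products after cutting along a simple closed curve; more concretely, choose a non-separating simple closed curve $c$ on $S_g$ whose complement $S_g\ssm c$ is connected, so $\Pi_g$ is an HNN extension of the free group $\pi_1(S_g\ssm c)$ with associated edge subgroup $\langle [c]\rangle\cong\mathbb Z$. One then passes to the kernel $N$ of the map $\Pi_g\to\mathbb Z$ dual to $c$; this $N$ is an ascending union of finitely generated free groups (it is the fundamental group of the infinite cyclic cover, which deformation retracts onto an infinite union of copies of $S_g\ssm c$ glued along circles), hence residually $p$. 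The quotient $\Pi_g/N\cong\mathbb Z$ is abelian, hence residually $p$; and the extension $1\to N\to\Pi_g\to\mathbb Z\to 1$ together with the fact that $N$ is residually $p$ and the $\mathbb Z$-action is reasonably controlled should let one conclude that $\Pi_g$ itself is residually $p$. Alternatively, and perhaps more cleanly, one may use the lower central $p$-series directly: by Labute's theorem (already invoked in Section 3) the associated graded Lie algebra of $\Pi_g$ with respect to the $p$-lower-central series, $\bigoplus_k \gamma_k^{(p)}/\gamma_{k+1}^{(p)}$, is the graded Lie algebra presented by the single quadratic relation $\sum_i[\bar\alpha_i,\bar\beta_i]=0$ over $\mathbb F_p$, and this Lie algebra is torsion-free as an $\mathbb F_p$-module in each degree; since the associated graded injects appropriately, $\bigcap_k\gamma_k^{(p)}=\{1\}$, which is exactly residual $p$-ness.

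Concretely, the steps in order are: (1) dispose of the open case by the freeness of $\Pi$ and residual $p$-ness of free groups; (2) for $S_g$ closed, invoke the one-relator presentation and Labute's description of the mod-$p$ lower central series quotients as the Lie algebra over $\mathbb F_p$ on generators $\bar\alpha_i,\bar\beta_i$ with the single relation $\sum_i[\bar\alpha_i,\bar\beta_i]=0$; (3) verify that each graded piece of this Lie algebra is a torsion-free (indeed finite-dimensional free) $\mathbb F_p$-module — this is the content of Labute's computation and amounts to checking the relation is not a zero-divisor in the relevant degrees; (4) deduce $\bigcap_k\gamma_k^{(p)}(\Pi_g)=\{1\}$, equivalently that $\Pi_g\to\Pi_g^{(p)}$ is injective.

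The main obstacle is step (3): one must know that the one-relator quadratic Lie algebra over $\mathbb F_p$ has no unexpected torsion or collapse in low degrees, i.e. that the relator $r=\sum_i[\bar\alpha_i,\bar\beta_i]$ behaves like a regular element of the free Lie algebra. This is precisely where Labute's theorem (and the fact that the relation is "mild" in his sense, being a non-degenerate quadratic form on $H_1$) does the work; I expect the proof in the paper to either quote this directly or to run the HNN/infinite-cyclic-cover argument of the alternative approach, which trades the Lie-algebra computation for the bookkeeping of how the residually-$p$ normal subgroup $N$ sits inside $\Pi_g$. Either way, everything else is formal.
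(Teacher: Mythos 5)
Your reduction (open case via freeness of $\Pi$ and residual $p$-ness of free groups; closed case as the real issue) is fine, but both of your proposed routes for closed $\Pi_g$ omit the step that actually does the work, namely separation. In the Lie-algebra route, knowing the associated graded of the (mod-$p$) lower central series --- even knowing it is exactly the free Lie algebra modulo the quadratic relation, with free graded pieces --- says nothing about $\bigcap_k\gamma_k^{(p)}$: every element of that intersection is invisible in the graded object. (For instance, $\Pi_g\times A$ with $A$ a finitely generated perfect group has the same graded Lie algebra as $\Pi_g$, yet is not residually nilpotent, let alone residually $p$.) So ``since the associated graded injects appropriately, $\bigcap_k\gamma_k^{(p)}=\{1\}$'' is a non sequitur, and ``torsion-free as an $\mathbb{F}_p$-module'' is vacuous. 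What Labute's theorem gives you is torsion-freeness of the quotients $\gamma_k/\gamma_{k+1}$ over $\Z$; to get residual $p$-ness you must separately know that $\Pi_g$ is residually nilpotent (Frederick, or Baumslag's residual freeness of surface groups) and then invoke Gruenberg's theorem that finitely generated torsion-free nilpotent groups are residually $p$ for every prime. With those inputs cited your step (4) is correct, but you located the difficulty in the wrong place: the crux is not the regularity of the relator (step (3), which Labute handles) but the triviality of the intersection, which is essentially the statement being proved.

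The HNN route has the same gap in another guise: an extension $1\to N\to G\to\Z\to 1$ with $N$ and $\Z$ residually $p$ need not be residually $p$ --- the Klein bottle group $\langle a,b\mid bab^{-1}=a^{-1}\rangle$ is such an extension and is not residually $p$ for any odd $p$ --- so the phrase ``should let one conclude'' is exactly where an argument controlling the $\Z$-action on $p$-quotients of $N$ is required and is not supplied. For comparison, the paper argues quite differently and avoids all of this: it adapts Hempel's geometric proof of residual finiteness, inducting on the number of self-intersection points of an immersed representative, handling the embedded case by a $p$-covering that makes the preimage components non-separating (Lemma~3.10 of \cite{sym}) followed by the mod-$p$ homology cover, and reducing the singular case via a simple loop in a regular neighborhood. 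Your strategy is repairable by citing Baumslag/Frederick plus Gruenberg (or by doing the $\Z$-action bookkeeping honestly), but as written the decisive separation step is missing.
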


\begin{proof}It is enough to show that, for a given $1\neq\alpha\in\Pi$ and a map
$f\co (S^1,\ast)\ra (S,\ast)$ representing $\alpha$, there is a characteristic unramified $p$-covering
$\pi\co S'\ra S$ such that $f$ does not lift to a map $\td{f}\co (S^1,\ast)\ra (S',\ast)$, where
we denote also by $\ast$ a base point on $S'$ lying over the base point on $S$ for $\Pi$.
It is not restrictive to assume that $f$ is an immersion whose image has transversal
self-intersection points.

Let us proceed by induction on the cardinality of the singular set $s(f)$ of the map $f$.

If $f$ is an embedding, there is a characteristic unramified $p$-covering $\pi\co S'\ra S$ such that
the connected components of $\pi^{-1}(f(S^1))$ are non-separating simple closed curves
(cf.\ Lemma~3.10 \cite{sym}). Let $S''\ra S'$ be the unramified $p$-covering with covering
transformation group isomorphic to the homology $H_1(S',\Z/p)$. Then, the composition
$\pi'\co S''\ra S$ is a characteristic unramified $p$-covering with the desired property.

If $s(f)\neq\emptyset$, let $U$ be a regular neighborhood of $f(S^1)$ in $S$. There is a simple
loop $g\co (S^1,\ast)\ra (U,\ast)$ which represents a non-trivial element of $\Pi$.

By the preceding case, there is a characteristic unramified $p$-covering $\pi\co S'\ra S$ such that
$g$ does not lift to a map $\td{g}\co (S^1,\ast)\ra (S',\ast)$. If $f$ does not lift too to $S'$, we are done.
If $f$ does lift to a map $\td{f}\co (S^1,\ast)\ra (S',\ast)$, then $s(\td{f})\subseteq s(f)$.
If $s(\td{f})=s(f)$, then $\pi|_{\td{f}(S^1)}$ would be an embedding, and $\pi$ would map
a neighborhood of $\td{f}(S^1)$ homeomorphically onto $U$. But then $g$ would lift to $S'$.
Therefore, there holds $s(\td{f})\subsetneq s(f)$ and the proof follows by induction.
\end{proof}

By Theorem~\ref{injective}, we can and do identify the group $\Pi$ with its image in the pro-$p$
completion $\hP^{(p)}$. The second result we need is the following:

\begin{theorem}\label{intersection}Let $\delta_1,\delta_2\in\Pi$ be elements such that the cyclic
subgroups $\delta_1^{\Z}$ and $\delta_2^{\Z}$ are $p$-closed in $\Pi$
and let $\delta_1^{\Z_p}$ and $\delta_2^{\Z_p}$ be the pro-cyclic subgroups topologically
generated by $\delta_1$ and $\delta_2$ in the pro-$p$ completion $\hP^{(p)}$. Then, there holds:
$$\Pi\cap\delta_1^{\Z_p}\cdot\delta_2^{\Z_p}=\delta_1^\Z\cdot\delta_2^\Z.$$
\end{theorem}

\begin{proof}If $\delta_1=\delta_2^{\pm 1}$, the statement is just a different formulation of the
fact that $\delta_1^{\Z}$ and $\delta_2^{\Z}$ are $p$-closed in $\Pi$. Let us then assume
$\delta_1\neq\delta_2^{\pm 1}$ which then implies that $\delta_1^{\Z_p}\cap\delta_2^{\Z_p}=\{1\}$.

If $\Pi$ is a free group, the equality in the statement of the theorem is a particular case of 
Theorem~6.1 \cite{RZ2}. However, a more direct  and self-contained proof immediately 
follows from the lemma:

\begin{lemma}\label{rz} Let $H_1$ and $H_2$ be finitely generated subgroups of a 
free group $F$ which are $p$-closed in $F$ and such that $H_1\cap H_2=\{1\}$.
Then the set $H_1\cdot H_2$ is $p$-closed in $F$.
\end{lemma}

\begin{proof} The proof is an adaptation to the pro-$p$ topology of Niblo's proof of
Theorem~3.2 \cite{Niblo}. By Theorem 5.7 \cite{RZ2}, there is a $p$-open subgroup $U$
of $F$ such that $H_1$ is a free factor of $U$. Since $(H_1\cdot H_2)\cap U$ is $p$-closed if and only
if $H_1\cdot H_2$ is $p$-closed, we may assume that $F=K\ast H_1$ and then form the double
$F\ast_{H_1}F=K\ast H_1\ast K$ along the subgroup $H_1$.
Let $\tau_i\co F\hookrightarrow F\ast_{H_1}F$, for $i=1,2$, be the natural $p$-continuous
monomorphism which identifies $F$, respectively, with the first and the second factor. The assignment
$x\mapsto \tau_1^{-1}(x)\tau_2(x)$ then defines a $p$-continuous map $\eta\co F\ra F\ast_{H_1}F$
and, by  Lemma~3.1 \cite{Niblo}, there holds:
$$\eta^{-1}(\langle\tau_1(H_2),\tau_2(H_2)\rangle)=H_1\cdot H_2.$$

It remains to show that $\langle\tau_1(H_2),\tau_2(H_2)\rangle$ is closed in the pro-$p$ topology of
$F\ast_{H_1}F$. Let us consider the natural $p$-continuous epimorphism $\mu\co F\ast_{H_1}F\ra F$
which identifies the two factors of the double. By hypothesis, there holds $H_1\cap H_2=\{1\}$ and then
$\mu^{-1}(H_2)\cap H_1=\{1\}$. From the Kurosh Subgroup Theorem
(cf.\ Theorem~14 in \cite{serre-77}), it follows:
$$\mu^{-1}(H_2)=(\mu^{-1}(H_2)\cap F)\ast(\mu^{-1}(H_2)\cap F)\ast D=
\tau_1(H_2)\ast\tau_2(H_2)\ast D,$$
where $D$ is some free factor. Since $\mu$ is $p$-continuous, $\mu^{-1}(H_2)$ is $p$-closed in
$F\ast_{H_1}F$ and then so is its free factor
$\tau_1(H_2)\ast\tau_2(H_2)=\langle\tau_1(H_2),\tau_2(H_2)\rangle$
(cf.\ Corollary 5.6 \cite{RZ2}).
\end{proof}

Let us then consider the case when $\Pi$ is a hyperbolic closed surface group. The idea is to reduce the proof
to Lemma~\ref{rz} as done above for $\Pi$ free. In the case under consideration, at least the group 
$F=\langle\delta_1,\delta_2\rangle$ is free, because it has infinite index in $\Pi$. 
This follows from the fact that a subgroup of infinite index of
a hyperbolic oriented surface group identifies with the fundamental group of a non-compact
oriented Riemann surface, which is free. 

We cannot directly replace $\Pi$ by $F$ in the proof of the theorem, because we do not know 
whether $F$ is closed in the pro-$p$ topology of $\Pi$ and so whether its closure $\ol{F}$ in $\hP^{(p)}$ 
coincides with the pro-$p$ completion of $F$.

Let us then observe that the closure $\ol{F}$ of $F$ in the pro-$p$ completion $\hP^{(p)}$ has infinite index. 
Indeed, since $\Pi$ is a closed surface group, its pro-$p$ completion $\hP^{(p)}$ is a Demu\v{s}kin pro-$p$ group
and so are its open subgroups. Now, a $2$-generated Demu\v{s}kin pro-$p$ group is soluble, therefore $\ol F$ 
is not open in $\hP^{(p)}$ and so is of infinite index. From part (b) of Exercise~5, Ch. I, \S 4.5,
\cite{serre}, it then follows that $\ol{F}$ is a free pro-$p$ group. 

Now, the intersection $F'=\ol F\cap \Pi$ has also infinite index in $\Pi$ and hence is a
free group. Moreover, the subgroup $F'$ is closed in the pro-$p$ topology of $\Pi$
and so its closure $\ol F$ in $\hP^{(p)}$ coincides with its pro-$p$ completion. 
Thus, by replacing $\Pi$ with $F'$, in the required equality 
$\Pi\cap\delta_1^{\Z_p}\cdot\delta_2^{\Z_p}=\delta_1^\Z\cdot\delta_2^\Z$, 
we are finally reduced to the case when $\Pi$ is a free group which we already treated in Lemma~\ref{rz}. 
\end{proof}


\vspace{0.3cm}

\noindent Marco Boggi,\\ Departamento de Matem\'aticas, Universidad de los Andes, \\
Carrera $1^a$ $\mathrm{N}^o$ 18A-10, Bogot\'a, Colombia.
\\
E--mail:\,\,\, marco.boggi@gmail.com

\bigskip

\noindent Pavel Zalesskii,\\ Departamento da Matem\'atica, Universidade de Bras\'{\i}lia, \\
70910-900, Bras\'{\i}lia-DF, Brasil.
\\
E--mail:\,\,\, pz@mat.unb.br

\end{document}